\theoremstyle{plain}
\newtheorem{thm}{Theorem}[section]
\newtheorem{prop}[thm]{Proposition}
\newtheorem{lemma}[thm]{Lemma}
\renewcommand{\latticebody}{\drop@{ }}
\theoremstyle{definition}
\newtheorem{defi}[thm]{Definition}
\theoremstyle{remark}
\newtheorem{remark}[thm]{Remark}
\newtheorem{ep}[thm]{Example}
\newcommand{\rra}{\rightrightarrows}
\newcommand{\Z}{\ensuremath{\mathbb Z}}
\newcommand{\R}{\ensuremath{\mathbb R}}
\newcommand{\g}{\ensuremath{\frak{g}}}
\newcommand{\h}{\ensuremath{\frak{h}}}
\newcommand{\cL}{{\mathcal L}}
\newcommand{\cA}{\mathcal{A}}
\newcommand{\cE}{\mathcal{E}}
\newcommand{\cC}{\mathcal{C}}
\newcommand{\cG}{\mathcal{G}}
\newcommand{\cO}{\mathcal{O}}
\newcommand{\cM}{\mathcal{M}}
\DeclareMathOperator{\Lie}{Lie}
\newcommand{\bt}{\mathbf{t}}                  %target
\newcommand{\bs}{\mathbf{s}}                  %source
\newcommand{\un}{\underline}
\newcommand{\vertprod}{\cdot_v} %vertical prodcut
\newcommand{\horprod}{\cdot_h}  %horizontal product
\def\PB(#1,#2,#3,#4){\left\{\begin{matrix}#1&\!\!\!\stackrel{?}{\longrightarrow}&\!\!\!#2\\
\downarrow&&\!\!\!\downarrow\\
#3&\!\!\!\stackrel{?}{\longrightarrow}&\!\!\!#4\end{matrix}\right\}}
\def\pb(#1,#2,#3,#4){ \hom(#1 \to #3, #2 \to #4)}
\newcommand{\pd}[1]{\frac{\partial}{\partial #1}} %\pd{x}
\definecolor{mygray}{rgb}{0.90,0.90,0.95}
\newcommand{\emptycomment}[1]{}
\begin{document}

\title{Higher Lie algebra actions on Lie algebroids}%{Lie $2$-algebra actions on NQ-manifolds}

\author{Marco Zambon\footnote{
Universidad Aut\'onoma de Madrid (Dept. de Matem\'aticas), and ICMAT(CSIC-UAM-UC3M-UCM),
Campus de Cantoblanco,
28049 - Madrid, Spain.
\texttt{marco.zambon@uam.es},\;\texttt{marco.zambon@icmat.es}},
Chenchang Zhu\footnote{Courant Research Centre ``Higher Order Structures'',
University of G\"ottingen, Germany.
\texttt{zhu@uni-math.gwdg.de}}
\thanks{2010 Mathematics Subject Classification:   primary  53D17,
%Poisson geometry and Lie algebroids
58A50,
%graded manifolds
 18D35.}\thanks{
Keywords: Lie algebroid, NQ-manifold, double Lie algebroid, higher Lie algebra action, higher Lie group action.}
%       Structured objects in a category (group objects, etc.)  .}
}

\date{}
%\date{\today}

\maketitle

\begin{abstract}
We consider a simple instance of action up to homotopy. More precisely, we
 consider strict actions of   DGLAs in degrees $-1$ and $0$ on  degree 1  $NQ$-manifolds. In a more conventional language this means: strict actions of Lie algebra crossed modules on Lie algebroids.

When the action is strict, we show that it integrates to group actions in the categories of Lie algebroids and Lie groupoids (i.e. actions of $\mathcal{LA}$-groups and 2-groups). We perform the integration in the framework of Mackenzie's doubles.

%We study quotients of degree 1 $NQ$-manifolds  by actions up to homotopy
%and by distributions,  recovering Mackenzie's notion of Lie algebroid ideal system. 
\end{abstract}

\setcounter{tocdepth}{2}

\tableofcontents
\newpage
\section*{Introduction}
\addcontentsline{toc}{section}{Introduction}

In recent years there has been an intense activity integrating certain infinitesimal structures, such as Lie algebroids \cite{cafe}\cite{cf} and $L_{\infty}$-algebras \cite{getzler}\cite{henriques}. Here   ``integration'' is meant in the same sense in which a Lie algebra is integrated to a corresponding  Lie group.
 Both Lie algebroids and (non-positively graded) $L_{\infty}$-algebras are instances of \emph{NQ-manifolds}---non-negatively graded manifolds equipped with a homological vector field (Def. \ref{NQm}).
In this paper we focus on \emph{actions}: we study  infinitesimal actions on NQ-manifolds and their integrations to global actions.
 
We make the following elementary but important observation:
While the   infinitesimal symmetries of an ordinary manifold are controlled by a Lie algebra (the vector fields), the 
infinitesimal symmetries of a NQ-manifold $\cM$ are given by the \emph{differential graded Lie algebra (DGLA)} of 
 vector fields $\chi(\cM)$. Further, if $\cM$ is a NQ-$n$ manifold\footnote{This means that its coordinates  are concentrated in  degrees $0,\dots,n$.}, its infinitesimal symmetries   are controlled by sub-DGLA which is an $(n+1)$-term DGLA---a special kind of Lie $(n+1)$-algebra. This implies that some kind of Lie $(n+1)$-group naturally plays the role of the global symmetries of $\cM$.

An instance of this is given by  the theory of Courant algebroids, which were a source of motivation for this work. Indeed  Courant algebroids are equivalent to a special class of NQ-manifolds (symplectic NQ-2 manifolds \cite{royt}), and  actions on Courant algebroids are realized by more data than just a Lie group action, as showed in  \cite{bcg}. %changed
\\

In this paper we consider the simplest case $n=1$, that is,   NQ-1 manifolds, which in  ordinary differential geometry language  are just \emph{Lie algebroids}. 
%The symmetries are given by Lie 2-algebras and Lie 2-groups.
 %The main purpose of this paper is to study infinitesimal actions on   NQ-1 manifolds
%and to integrate them to global actions.
%In a future work  we plan to extend our study
%to   NQ-2 manifolds and  to Courant algebroids. 
%In most of the paper we restrict our discussion to the case $n=1$. In this case
We  saw above that the infinitesimal symmetries of NQ-1 manifolds  are given by  2-term DGLAs. The latter are also known as
\emph{strict Lie 2-algebras}\footnote{For a general study of Lie 2-algebras please see
 Baez et al. \cite{baez:2algebras}.} (Def. \ref{strict}), and they are equivalent to crossed modules of
Lie algebras. 
%(Lemma \ref{bij}).  
The integration of a strict Lie
 2-algebra is a strict Lie 2-group. Hence  strict Lie 2-groups control the global symmetries of an NQ-1 manifold.

A \emph{strict (infinitesimal) action} (Def. \ref{strictaction}) on an NQ-1 manifold $\cM$ is 
a morphism of strict Lie 2-algebra
$$L \to \chi(\cM).$$
%Even when $L$ is strict,   the action itself might be non-strict.
 % is strict, in general, the action itself might not be strict.
 %Thus we don't lose generality by restricting ourselves on the strict models.
%We focus on strict actions, which w
In \S \ref{sec:act}   we define the notion of \emph{integration} of such an infinitesimal action (Def. \ref{defin}) and we find the integrating action explicitly (Thm. \ref{thm:2groupaction}, Thm. \ref{check}, and Prop. \ref{explicitPhi}): it is an action 
$${\mathcal G}\times \Gamma \rightarrow \Gamma$$
which we describe explicitly, where  ${\mathcal G}$ is the strict Lie-2 group integrating $L$ and  $\Gamma$ is the   Lie groupoid integrating the Lie algebroid corresponding to $\cM$.

 More in detail, 
a strict action of $L$ on $\cM$ gives rise to a
%The way we deduce the definition of a strict action of a strict Lie 2-group on $A$ is by integrating the
``action double Lie algebroid'', which we integrate  to an ``action double Lie groupoid''. This is a major step, as there are no general statements in the literature which allow to integrate double Lie algebroids to double groupoids.
From the ``action  double Lie groupoid''  we extract the action  ${\mathcal G} \times \Gamma \to \Gamma$.  This gives a conceptual explanation for the results obtained in a special case by Cattaneo and the first author \cite[Thm. 14.1]{CZ2}. Notice that the integrated action is not on $\cM$, but rather on the its integration $\Gamma$.\\
%When the action or the Lie 2-algebra are not necessarily strict, we expect to obtain  an ``action Lie 2-algebroid'' (Remark \ref{rk:non-strict}), whose study we postpone to future work.
% In the strict action case we   discuss the Lie 2-groupoid integrating the ``action Lie 2-algebroid''; we  obtain it applying the Artin-Mazur construction \cite{artin-mazur} \cite[Chap.12.5.2]{porter:menagerie} to the ``action double Lie groupoid'' (Remark \ref{rk:artin-mazur}).

\noindent\textbf{Organization of the text:}
\S \ref{infactNQ} defines NQ-manifolds, strict Lie 2-algebras, and the notion of strict   action. The purpose of  \S \ref{sec:glo} 
is to integrate such strict actions. We introduce the formalism of Mackenzie's doubles, apply it to the integration of strict Lie 2-algebras (a toy example), and finally we integrate  strict actions in \S \ref{sec:act}. In \S \ref{sec:ex} we give  examples of Lie 2-algebra actions on tangent bundles, cotangent bundles of Poisson manifolds and Lie algebras.\\
%; further examples are given at the end of \S \ref{quotmu}.
%Finally in \S \ref{quotmu} we discuss  quotients of NQ-1 manifolds by the above  higher actions. We build on  \S \ref{sec:NQ-1}, where
% we discuss distributions on  NQ-1 manifolds
% and rediscover   Mackenzie's notion  of Lie ideal system on  Lie algebroids   \cite{MK2}. As such distributions can be thought of  as the images of    Lie 2-algebra actions, this   serves as some evidence for our considerations on higher Lie algebras.
%What is special in the higher action case is that a \emph{regular} action (not necessarily strict) of a strict Lie 2-algebra may \emph{not} have an involutive image. We need to make an additional assumption to achieve involutivity (Prop. \ref{goodaction}) and obtain a quotient NQ-1 manifold, which can also be realized via ideal systems. \\

\noindent\textbf{Notation and conventions:}
$M$ always denotes a smooth manifold.  For any vector bundle $E$, we denote by $E[1]$ the N-manifold obtained from $E$  by declaring that the fiber-wise linear coordinates on $E$ have degree one. If $\cM$ is an N-manifold, we denote by $C(\cM)$
%the corresponding sheaf, over the body $M$ of $\cM$, of
the graded commutative algebras of ``functions on $\cM$''. By
%$\chi(\cM)$ or
$\chi(\cM)$ we denote graded Lie algebra of vector fields on $\cM$ (i.e., graded derivations of $C(\cM)$).
 
\noindent
The symbol $A$ always denotes a Lie algebroid over $M$. When $A$ is integrable, we denote the corresponding source simply connected Lie groupoid by $\Gamma$, and its source and target maps by $\bs$ and $\bt$. We adopt the convention that two elements $x,y\in \Gamma$ are composable to $x \circ y$ iff $\bs(x)=\bt(y)$. We identify
$A\cong (ker \;\bs_*)|_M$.\\

\noindent\textbf{Acknowledgements:} 
We thank  Rajan Mehta, Tim Porter, Pavol \v{S}evera and Jim Stasheff for their very helpful comments and for discussions. We learnt about the Artin-Mazur
construction from Xiang Tang, whom we hereby thank. We are grateful to him for sharing   
 with us this idea from \cite{MT}. 

\noindent Zambon thanks the Courant Research Centre  ``Higher Order
Structures´´  for hospitality.  Zambon was partially supported by the Centro de Matem\'atica da Universidade do Porto, financed by FCT through the programs POCTI and POSI,  by the FCT program Ciencia 2007, grants PTDC/MAT/098770/2008 and 
PTDC/MAT/099880/2008  (Portugal), and by MICINN RYC-2009-04065 (Spain).

\noindent
Zhu thanks C.R.M. Barcelona for hospitality. Zhu is supported by the German Research Foundation
(Deutsche Forschungsgemeinschaft (DFG)) through
the Institutional Strategy of the University of G\"ottingen.

\section{Infinitesimal actions}\label{infactNQ}
% on NQ-manifolds
This section introduces NQ-manifolds (\S \ref{grm}) and infinitesimal actions of Lie-2 algebras on them (\S \ref{sec:actions}). The integration of these infinitesimal actions is the main object of this paper. 
%Further we study   distributions on NQ-manifolds (\S \ref{sec:NQ-1}).

\subsection{Background on graded geometry}\label{grm}
 
We start by recalling some background material on graded geometry.
In \S \ref{nnman} we consider degree 1  N-manifolds, which correspond
simply to vector bundles. Then \S \ref{enq} we endow them with a
homological vector field, obtaining Lie algebroids. 

 The reason why we describe classical geometric
objects using graded geometry is that the latter framework allows to
 describe the classical structures and their symmetries in a natural way, namely   in terms of vector fields. Further, since  graded geometry is defined in terms of sheaves, this framework 
 allows to make    use of (graded) local coordinates.
 
\subsubsection{N-manifolds}\label{nnman}
The notion of N-manifold (``N'' stands for non-negative) was introduced by \v{S}evera  in \cite{SWLett}\cite{s:funny}. Here we adopt the definition given by Mehta in  \cite[\S 2]{rajthesis}. Useful references are also  \cite[\S 2]{AlbICM}\cite{AlbFlRio}.

If $V=\oplus_{i< 0}V_i$ is a finite dimensional $\Z_{<0}$-graded vector space, recall that $V^*$ is the $\Z_{>0}$-graded vector space defined by $(V^*)_i=(V_{-i})^*$. We use $S^{\bullet}(V^*)$ to  denote the \emph{graded} symmetric algebra over $V^*$, so its homogeneous elements anti-commute if they both have odd degree.  $S^{\bullet}(V^*)$ is a graded commutative algebra  concentrated in positive degrees.
%, and for any integer $k$, $V[k]$ denotes
%the $\Z$-graded vector space obtained from $V$ shifting the degree by $k$: $(V[k])_i=V_{k+i}$.
 
 Ordinary manifolds are modeled on open subsets of $\R^n$, and N-manifolds modeled on the following graded charts:
\begin{defi}\label{locm}
Let $V=\oplus_{i< 0}V_i$ be a finite dimensional $\Z_{<0}$-graded vector space.\\
 The \emph{local model for an N-manifold} consists  of a pair  as follows:
%$$(U_0,C^{\infty}(U_0)\otimes S^{\bullet}(V^*)$$
%(the \emph{body} and the graded commutative algebra of \emph{functions} on the graded %manifold).
\begin{itemize}
\item $U\subset \R^n$ an open subset 
\item the sheaf (over $U$) of graded commutative algebras given by
$U'\mapsto C^{\infty}(U')\otimes S^{\bullet}(V^*)$.
\end{itemize} 
\end{defi}
\begin{defi}\label{def:grm}
An \emph{N-manifold} $\cM$ consists of a pair  as follows:
\begin{itemize}
\item a topological space $M$ (the ``body'')
\item a sheaf $\cO_M$ over $M$ of graded commutative algebras, locally isomorphic to the
above local model (the sheaf of ``functions'').
\end{itemize}
\end{defi}

% Then it is easy to see that  all the local chart $(U,V)$ are isomorphic.
We use the notation $C(\cM):=\cO_M(M)$ to denote the space of ``functions
on $\cM$''.
By $C_k(\cM)$ we denote the degree $k$ component of $C(\cM)$, for any non-negative $k$. The
{\em degree} of the graded manifold is the largest $i$ such that  $V_{-i}\neq\{0\}$.
Degree zero graded manifolds are just
 ordinary  manifolds:  $V=\{0\}$, and all functions have degree zero.

%To study the symmetries of a N-manifold $\cM$, we look at the vector
%fields on  $\cM$.
\begin{defi}
A \emph{vector field} on $\cM$ is a
graded derivation of the algebra\footnote{Strictly speaking one should define vector fields
in terms of the sheaf $\cO_M$ over $M$. However we will work only with objects defined on the whole of the body $M$, hence the above definition will suffice for our purposes. }
$C(\cM)$.
\end{defi}

Since $C(\cM)$ is a graded commutative algebra (concentrated in non-negative degrees), the space of vector fields $\chi(\cM)$,
equipped with the graded commutator $[-,-]$,  is a graded
Lie algebra (see Def. \ref{dgla}). 

All N-manifolds in this note arise from graded vector bundles\footnote{Actually it can be shown that every finite dimensional N-manifold is (non-canonically) isomorphic to one arising from a graded vector bundle.}, as follows:
\begin{ep}\label{VBs}
Let $F=\oplus_{i< 0}F_{i} \rightarrow M$ be a   graded vector bundle.
%=\oplus_{i\in\Z-\{0\}}E_i$
The N-manifold associated to it has body $M$, and $\cO_M$ is given by the sheaf of sections of  $S^{\bullet}F^*$. \end{ep}

We will focus mainly on  degree 1 N-manifolds, which we now describe in more detail. To do so we recall first
\begin{defi}\label{CDO}
Given a vector bundle $E$ over $M$, a  \emph{covariant differential operator\footnote{Also known as derivative endomorphism, see \cite[\S 1]{MC-YKS}.} (CDO)}
is a linear map $Y: \Gamma(E) \to \Gamma(E) $ such that there exists a vector field
$\underline{Y} $  on $M$ (called \emph{symbol}) with
\begin{align}\label{eqCDO}
 Y(f\cdot e) = \underline{Y} (f) e + f \cdot Y(e), \quad
\text{for} \; f \in C^\infty(M), \; e \in \Gamma(E).
\end{align}
We denote the set of CDOs on $E$ by
 $CDO(E)$.
If $Y\in CDO(E)$, then
the dual  $Y^*\in CDO(E^*)$ is defined by
\begin{equation}\label{dualCDO}
\langle Y^* (\xi), e \rangle + \langle \xi, Y(e) \rangle=\un{Y}
(\langle \xi, e\rangle ), \quad \text{for all}\; e \in \Gamma(E), \;
\xi \in \Gamma(E^*).
\end{equation}
\end{defi}

Recall that if $E\rightarrow M$ is a (ordinary)  vector bundle, $E[1]$ denotes the graded vector bundle whose fiber over $x\in M$ is  $(E_x)[1]$ (a graded vector space concentrated in degree $-1$).

\begin{lemma}\label{N1}
If $E\rightarrow M$ is a  vector bundle, then
$\cM:=E[1] $ is a degree $1$
N-manifold with body $M$, and conversely all  degree $1$
N-manifolds arise this way.

The algebra of functions $C(\cM)$ is generated
by
$$C_0(\cM)=C^{\infty}(M) \text{  and  }
C_1(\cM)= \Gamma(E^*).$$

The $C(\cM)$-module of vector fields is generated by elements in degrees $-1$ and $0$. We have identifications
$$\chi_{-1}(\cM)=\Gamma(E)\text{  and  }\chi_0(\cM)=CDO(E^*)$$
 induced by the actions on functions. Further the map $\chi_0(\cM) \cong CDO(E)$  obtained dualizing CDOs is just $X_0\mapsto[X_0,\cdot]$ (using the identification   $\chi_{-1}(\cM)=\Gamma(E)$).
\end{lemma}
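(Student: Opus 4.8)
The plan is to reduce the entire statement to two facts: that the function algebra of a degree $1$ N-manifold is the algebra of sections of an exterior-algebra bundle, and that a graded derivation of a graded-commutative algebra is both determined by, and freely prescribable on, a set of algebra generators, subject only to the Leibniz rule. First I would set up the two correspondences together with the description of the generators of $C(\cM)$. If $E\to M$ is a vector bundle, then $E[1]$ is a graded vector bundle concentrated in degree $-1$, its dual is $E^*$ in degree $+1$, and Example~\ref{VBs} gives that the structure sheaf of $E[1]$ is the sheaf of sections of $S^{\bullet}(E^*[1])=\Lambda^{\bullet}E^*$, graded by form degree and so concentrated in degrees $0,\dots,\rank E$; hence $E[1]$ is a degree $1$ N-manifold with body $M$. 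Conversely, for a degree $1$ N-manifold $\cM$ with body $M$, Definition~\ref{locm} gives that $\cO_M$ is locally isomorphic to $C^{\infty}(U)\otimes\Lambda^{\bullet}(V^*)$ with $V=V_{-1}$; in particular $C_1(\cM)$ is locally $C^{\infty}(U)\otimes V^*$, hence the module of sections of a vector bundle $E^*\to M$, while $C_0(\cM)=C^{\infty}(M)$. The canonical $C^{\infty}(M)$-algebra map from $\Gamma(\Lambda^{\bullet}E^*)$ (the free graded-commutative $C^{\infty}(M)$-algebra on $\Gamma(E^*)$ placed in degree $1$) to $C(\cM)$ extending $C_0(\cM)\oplus C_1(\cM)$ is, in the local model, the tautological isomorphism, hence an isomorphism; this identifies $\cM$ with $E[1]$ for $E:=(E^*)^*$, and simultaneously shows that $C(\cM)$ is generated over $\R$ by $C_0(\cM)=C^{\infty}(M)$ and $C_1(\cM)=\Gamma(E^*)$.

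Next I would handle the vector fields. A graded derivation $X$ of $C(\cM)$ is determined by $X|_{C_0(\cM)}$ and $X|_{C_1(\cM)}$. For $X\in\chi_{-1}(\cM)$ one has $X|_{C_0(\cM)}=0$ (since $C_{-1}(\cM)=0$), and the Leibniz rule forces $X|_{C_1(\cM)}\colon\Gamma(E^*)\to C^{\infty}(M)$ to be $C^{\infty}(M)$-linear; conversely every such linear map extends uniquely to a degree $-1$ derivation (contraction), whence $\chi_{-1}(\cM)=\Hom_{C^{\infty}(M)}(\Gamma(E^*),C^{\infty}(M))=\Gamma(E)$, with $s\in\Gamma(E)$ acting as $\iota_s$. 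For $X\in\chi_0(\cM)$, $X|_{C_0(\cM)}$ is a derivation of $C^{\infty}(M)$, i.e.\ a vector field $\un X$ on $M$, and the Leibniz rule on products $f\cdot\xi$ ($f\in C^{\infty}(M)$, $\xi\in\Gamma(E^*)$) is exactly the defining identity \eqref{eqCDO} for $X|_{C_1(\cM)}$ to lie in $CDO(E^*)$ with symbol $\un X$; conversely a CDO $Y$ on $E^*$ with symbol $\un Y$ extends to a unique degree $0$ derivation ($\un Y$ on $C_0$, $Y$ on $C_1$, Leibniz elsewhere), so $\chi_0(\cM)=CDO(E^*)$. For the generation statement, in a local frame $e^i$ of $E^*$ over base coordinates $x^a$ every vector field is a $C(\cM)$-linear combination of $\partial_{x^a}\in\chi_0(\cM)$ and $\partial_{e^i}=\iota_{e_i}\in\chi_{-1}(\cM)$; since $\chi_{-1}(\cM)$ and $\chi_0(\cM)$ are modules of global sections of vector bundles, a partition-of-unity argument upgrades this to the claim that $\chi(\cM)$ is generated as a $C(\cM)$-module by $\chi_{-1}(\cM)\cup\chi_0(\cM)$.

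Then I would verify the last assertion by a direct computation. Given $X_0\in\chi_0(\cM)$, write $Y:=X_0|_{C_1(\cM)}\in CDO(E^*)$, with symbol $\un{X_0}$; for $s\in\Gamma(E)=\chi_{-1}(\cM)$ the bracket $[X_0,\iota_s]$ is again a degree $-1$ derivation, hence $\iota_{s'}$ for a unique $s'\in\Gamma(E)$, and for $\xi\in\Gamma(E^*)=C_1(\cM)$,
\begin{equation*}
\langle s',\xi\rangle=[X_0,\iota_s](\xi)=\un{X_0}\bigl(\langle s,\xi\rangle\bigr)-\langle s,Y(\xi)\rangle=\langle Y^*(s),\xi\rangle ,
\end{equation*}
the last equality being the definition \eqref{dualCDO} of the dual CDO $Y^*\in CDO(E)$ (and $[X_0,\iota_s]$ annihilates $C_0(\cM)$, as it must). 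Hence $s'=Y^*(s)$, i.e.\ $X_0\mapsto[X_0,\cdot]$, viewed as an operator on $\chi_{-1}(\cM)=\Gamma(E)$, is exactly the composition of $\chi_0(\cM)\cong CDO(E^*)$ with the dualization isomorphism $CDO(E^*)\cong CDO(E)$.

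The main obstacle I expect is not any single computation but the soft reconstruction steps: checking that the algebra map $\Gamma(\Lambda^{\bullet}E^*)\to C(\cM)$ and the degree $0$ derivation attached to a CDO are well defined on the merely locally free graded-commutative algebra $\Lambda^{\bullet}E^*$, and that the locally defined pieces patch (they do, since they are forced on generators); likewise the partition-of-unity step that turns the local module description of $\chi(\cM)$ into a global one. Once the pairing conventions in \eqref{eqCDO}--\eqref{dualCDO} are pinned down, the displayed identity is a one-line check.
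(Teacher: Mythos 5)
Your proposal is correct and follows essentially the same route as the paper's proof in Appendix \ref{app:nq}: identify $C_0,C_1$ from the local model, use the graded Leibniz rule on degree $0$ and $1$ generators to get $\chi_{-1}(\cM)=\Gamma(E)$ and $\chi_0(\cM)=CDO(E^*)$, and verify the last claim by unwinding the commutator against the defining identity \eqref{dualCDO}. The only differences are cosmetic — you spell out the converse extension steps and the partition-of-unity patching that the paper leaves implicit.
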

The proof of Lemma \ref{N1} is given in Appendix \ref{app:nq}.

\begin{remark}\label{coordsNQ1}
Let us choose
 coordinates $\{x_i\}$ on an open subset $U\subset M$ and a  frame $\{e_{\alpha}\}$ of sections of $E|_U$.
Let $\{\xi^{\alpha}\}$
be the dual frame for $E^*|_U$, and assign degree $1$ to its elements. Then $\{x_i, \xi^{\alpha}\}$
form a set of coordinates for $\cM:=E[1]$ (in particular they generate $C(\cM)$ over $U$).
%In these coordinates, a section 0(F[1]) $a_i(x)$ corresponds to the degree $1$ function  $a_i(x)\xi^i$ on $T^*[1]M$.
The coordinate expression of vector fields is as follows. $\chi_{-1}(\cM)$ consist of elements of the form $f_{\alpha}\pd{\xi^{\alpha}}$, and $\chi_{0}(\cM)$ of elements of the form $g_{i}\pd{x_i}+
f_{ \alpha \beta}\xi^{\alpha}\pd{\xi^{\beta}}.$ Here  $f_{\alpha},g_{i},f_{ \alpha \beta} \in C^{\infty}(M)$, for $i\le dim(M)$ and $\alpha, \beta \le rk(E)$, and we adopt the Einstein summation convention.
\end{remark}

\subsubsection{NQ-manifolds and Lie algebroids}\label{enq}
We will be interested in N-manifolds equipped with extra structure:
\begin{defi}\label{NQm}
An {\em NQ-manifold} is an N-manifold $\cM$  equipped
with a \emph{homological vector field}, i.e. a degree 1 vector field $Q$ such that $[Q, Q]=0$.
\end{defi}
 
To shorten notation, we call a degree $n$ NQ-manifold a {\em NQ-$n$ manifold}.

Before considering $\chi(\cM)$, we recall the notion of  differential graded Lie algebra (DGLA):
\begin{defi}\label{dgla}
  A \emph{graded Lie algebra} consists of a  graded vector space $L=\oplus_{i\in \Z} L_i$
together with a  bilinear bracket $[\cdot,\cdot] \colon L \times L \rightarrow L$ such that, for all  homogeneous  $a,b,c\in L$:
\begin{itemize}
\item[--] the bracket is degree-preserving: $[L_i,L_j]\subset L_{i+j}$
 \item[--] the bracket is graded skew-symmetric:
 $[a,b]=-(-1)^{|a||b|}[b,a]$
\item[--] the adjoint action $[a,\cdot]$
is a degree $|a|$ derivation of the bracket (Jacobi identity): $[a,[b,c]]=[[a,b],c]+(-1)^{|a||b|}[b,[a,c]]$ .
\end{itemize}
  A \emph{differential graded Lie algebra} (DGLA) $(L,[\cdot,\cdot], \delta)$ is a graded Lie algebra together with a linear $\delta : L \rightarrow L$ such that
\begin{itemize}
 \item[--] $\delta$ is a degree $1$ derivation of the bracket:
$\delta(L_i)\subset L_{i+1}$ and $\delta[a,b]=[\delta a,b]+ (-1)^{|a|}[a, \delta b]$
\item[--]  $\delta^2=0$.
\end{itemize}
\end{defi}

\begin{lemma} \label{lemma:vf}
For a NQ-$n$ manifold  $\cM$, the space of vector fields  $$(\chi(\cM),  [Q, -], [-,-])$$
is a negatively bounded DGLA with lowest degree $-n$.
\end{lemma}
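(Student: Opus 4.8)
The plan is to check the three defining properties of a DGLA (Definition \ref{dgla}) for $(\chi(\cM),[-,-],\delta)$, where $\delta:=[Q,-]$, and then to identify the lowest degree. The underlying graded Lie algebra structure is already at hand: as recorded after the definition of a vector field, since $C(\cM)$ is graded commutative and concentrated in non-negative degrees, the graded commutator of two graded derivations is again a graded derivation, and graded skew-symmetry together with the graded Jacobi identity follow by the usual sign bookkeeping on $C(\cM)$; I would not reproduce this routine verification.

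It remains to treat $\delta$. Since $Q$ is a degree $1$ vector field (Definition \ref{NQm}), the operator $\delta=[Q,-]$ maps $\chi_i(\cM)$ into $\chi_{i+1}(\cM)$, hence has degree $1$. The Leibniz rule
\[
\delta[X,Y]=[\delta X,Y]+(-1)^{|X|}[X,\delta Y]
\]
is precisely the graded Jacobi identity of $\chi(\cM)$ for the triple $(Q,X,Y)$ with $|Q|=1$. For $\delta^2=0$ I would apply the graded Jacobi identity to $(Q,Q,X)$; since $|Q||Q|=1$ this gives
\[
[Q,[Q,X]]=[[Q,Q],X]-[Q,[Q,X]],
\]
so $2\,[Q,[Q,X]]=[[Q,Q],X]=0$ using $[Q,Q]=0$, and dividing by $2$ (we work over $\R$) yields $\delta^2=0$. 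This establishes that $(\chi(\cM),[-,-],\delta)$ is a DGLA.

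For the degree bound, I would use the structure of $C(\cM)$ rather than the bare definition: by Example \ref{VBs} (equivalently, the local model of Definition \ref{def:grm}), $C(\cM)$ vanishes in negative degrees and in degrees above $n$, and is generated as a graded commutative algebra by elements in degrees $0,1,\dots,n$. Since a graded derivation is determined by its values on generators, a derivation $X$ of degree $k<-n$ sends each generator, of some degree $j\le n$, into $C_{j+k}(\cM)=0$ and hence vanishes; thus $\chi_k(\cM)=0$ for $k<-n$, so the DGLA is negatively bounded with lowest degree at least $-n$. Sharpness is a local computation: for a degree-$n$ coordinate $\theta$ the field $\partial/\partial\theta$ is a nonzero element of $\chi_{-n}(\cM)$ (globally, contraction gives an embedding $\Gamma(F_{-n})\hookrightarrow\chi_{-n}(\cM)$), and the top-degree part is nonzero by the convention that $\cM$ has degree exactly $n$; hence the lowest degree is $-n$.

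I do not expect a genuine obstacle: the whole content is the interaction of $[Q,Q]=0$ with the graded Jacobi identity. The only points that need a little care are the signs in the Jacobi identity, the harmless division by $2$ in $\delta^2=0$, and --- for the sharp degree bound --- appealing to the explicit generators of $C(\cM)$ from Example \ref{VBs} rather than to the abstract notion of an N-manifold.
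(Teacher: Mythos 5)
Your proof is correct and follows essentially the same route as the paper: $\delta^2=0$ from $[Q,[Q,-]]=\tfrac12[[Q,Q],-]$, the Leibniz rule from the graded Jacobi identity, and the bound $\chi_k(\cM)=0$ for $k<-n$ from the fact that a vector field is determined by its action on generators/coordinates of degree at most $n$ (the paper phrases this via the local expression $\sum_i f_i\,\partial/\partial y_i$ with $\deg(\partial/\partial y_i)=-\deg(y_i)\ge -n$ and $f_i$ of non-negative degree). One small slip: $C(\cM)$ does \emph{not} in general vanish in degrees above $n$ (products of positive-degree generators can exceed degree $n$), but your argument only uses that the generators have degree $\le n$ and that $C(\cM)$ vanishes in negative degrees, so nothing is affected.
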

\begin{proof} The fact that $[Q, -]$   squares to zero follows from $[Q,[Q,-]]=\frac{1}{2}[[Q, Q],-]=0$. The fact that $[Q, -]$ is a degree $1$ derivation of the Lie bracket follows from the Jacobi identity. So the above is a DGLA.

 A vector field on $\cM$ has local expression $\sum_{i} f_i \frac{\partial}{\partial y_i}$, where $f_i\in C(\cM)$ and $y_i$'s are local coordinates on $\cM$. The degree of $\frac{\partial}{\partial y_i}$ is $-deg(y_i)$. Since $deg(y_i)\in\{0,\dots,n\}$ we are done.\end{proof}

\begin{remark}\label{rk:n-term}
As a $C(\cM)$-module, $\chi(\cM)$ is generated by its elements in degrees $-n,\dots,0$.
%\[ \chi(\cM) = C(\cM) \cdot \big( \chi_{-n} (\cM)\oplus \cdots \oplus \chi_{0} (\cM) \big). \]
This suggests  that the most important information is contained in following the truncated DGLA (a sub-DGLA of $\chi(\cM)$):
\begin{equation} \label{eq:vf-m}
\chi_{-n} (\cM)\oplus  \cdots \oplus \chi_{-1} (\cM)\oplus \chi_{0}^Q (\cM),
\end{equation}
where
\begin{equation*} 
 \chi_{0}^Q (\cM):= \{X \in \chi_{0}(\cM): [Q,X]=0 \}.
\end{equation*}
\end{remark}

A well-known example of NQ-manifolds is given by Lie algebroids \cite{MK2}.

\begin{defi}
A {\em Lie algebroid} $A$ over a manifold $M$ is a vector
bundle over $M$, such that the global sections of $A$ form a Lie
algebra with  Lie bracket $[\cdot,\cdot]_A$ and Leibniz rule holds
\[ [X, fY]_A=f[X, Y]_A + \rho(X)(f) Y , \quad X, Y \in \Gamma(A), f
\in C^\infty(M), \]where $\rho: A \to TM$  is
  a vector
bundle morphism called the  \emph{anchor}.
\end{defi}

The following is well known (\cite{vaintrob}, see also \cite{yvette}):
\begin{lemma}\label{nq1la}
NQ-1 manifolds  are in bijective correspondence with Lie algebroids.
\end{lemma}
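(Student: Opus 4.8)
The plan is to construct the correspondence in both directions and check that they are mutually inverse. Given a Lie algebroid $A \to M$, the associated NQ-1 manifold is $\cM = A[1]$, which by Lemma \ref{N1} is indeed a degree $1$ N-manifold with body $M$ and $C(\cM) = \Gamma(S^\bullet A^*)$. The point is to produce the homological vector field $Q$. I would define $Q$ to be the Chevalley--Eilenberg (Lie algebroid) differential $d_A$ on $\Gamma(\wedge^\bullet A^*) = C(\cM)$, given on generators by $(d_A f)(X) = \rho(X)(f)$ for $f \in C^\infty(M) = C_0(\cM)$ and $(d_A \xi)(X,Y) = \rho(X)\langle \xi, Y\rangle - \rho(Y)\langle \xi, X\rangle - \langle \xi, [X,Y]_A\rangle$ for $\xi \in \Gamma(A^*) = C_1(\cM)$, extended as a degree $1$ derivation. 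One checks $d_A$ is well-defined (i.e. the formula on $\xi$ is $C^\infty(M)$-multilinear in each slot as a form, using the anchor/Leibniz axioms) and is a derivation of degree $1$, hence a vector field of degree $1$ on $\cM$; and $d_A^2 = 0$ is equivalent, on generators, precisely to the Jacobi identity for $[\cdot,\cdot]_A$ together with the compatibility of $\rho$ with the bracket. So $(A[1], d_A)$ is an NQ-1 manifold.

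Conversely, given an NQ-1 manifold $\cM$, by Lemma \ref{N1} we have $\cM = E[1]$ for a unique (up to iso) vector bundle $E \to M$, with $C_0(\cM) = C^\infty(M)$, $C_1(\cM) = \Gamma(E^*)$. Set $A := E$. A homological vector field $Q$ has degree $1$, so $Q$ raises function degree by one: $Q$ maps $C_0(\cM) \to C_1(\cM)$ and $C_1(\cM) \to C_2(\cM) = \Gamma(\wedge^2 E^*)$. From $Q|_{C_0}: C^\infty(M) \to \Gamma(E^*)$, which is a derivation (being the restriction of the derivation $Q$ composed with nothing — more precisely $Q(fg) = Q(f)g + fQ(g)$ for $f,g \in C^\infty(M)$), I extract the anchor $\rho: E \to TM$ by $\langle Q(f), X\rangle =: \rho(X)(f)$; the Leibniz property in $f$ shows $\rho(X)$ is a vector field for each $X \in \Gamma(E)$ and $C^\infty(M)$-linearity in $X$ (which needs checking — it follows because $Q(f)$ is a genuine section of $E^*$, i.e. tensorial) makes $\rho$ a bundle map. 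From $Q|_{C_1}: \Gamma(E^*) \to \Gamma(\wedge^2 E^*)$ I define $[\cdot,\cdot]_A$ on $\Gamma(E)$ by the same Chevalley--Eilenberg formula read backwards: $\langle \xi, [X,Y]_A\rangle := \rho(X)\langle\xi,Y\rangle - \rho(Y)\langle\xi,X\rangle - (Q\xi)(X,Y)$. One checks this is well-defined (a section, not just a bidifferential operator) and skew; the Leibniz rule for $[\cdot,\cdot]_A$ with anchor $\rho$ comes from $Q$ being a derivation; and the Jacobi identity is exactly $Q^2 = 0$ evaluated on $C_1(\cM)$ (while $Q^2 = 0$ on $C_0(\cM)$ gives $\rho([X,Y]_A) = [\rho(X),\rho(Y)]$, which in fact also follows from Jacobi + Leibniz but drops out directly here).

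Finally I would verify that the two constructions are inverse to each other: starting from $(A,[\cdot,\cdot]_A,\rho)$, forming $(A[1], d_A)$, and reading off the anchor and bracket recovers exactly $\rho$ and $[\cdot,\cdot]_A$ by construction of $d_A$ on generators; and starting from $(\cM, Q) = (E[1], Q)$, the Lie algebroid $(E, [\cdot,\cdot], \rho)$ produced has $d_E = Q$ since both are degree $1$ derivations of $C(\cM)$ agreeing on the generators $C_0(\cM)$ and $C_1(\cM)$, and a derivation is determined by its values on generators. Morphisms match up similarly (a vector bundle map covering a map of bases is a Lie algebroid morphism iff the pullback on exterior algebras is a chain map), giving the bijective correspondence. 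I expect the only real subtlety — the ``main obstacle'' in an otherwise bookkeeping argument — to be checking tensoriality at the two places above: that $Q(f) \in \Gamma(E^*)$ genuinely defines a $C^\infty(M)$-linear anchor, and that the bracket formula extracted from $Q\xi$ lands in $\Gamma(E)$ rather than merely in first-order differential operators; both are forced by the degree-$1$ derivation property of $Q$ together with $Q$ preserving the subspaces $C_0(\cM)$, $C_1(\cM)$, but writing it cleanly in coordinates (using Remark \ref{coordsNQ1}) is the step that requires care. Since the reference \cite{vaintrob} is cited, I would also simply remark that full details appear there.
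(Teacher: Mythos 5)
Your argument is correct, and the forward direction coincides with what the paper does: the homological vector field on $A[1]$ is precisely the Lie algebroid (Chevalley--Eilenberg) differential $d_A$ on $\Gamma(\wedge^{\bullet}A^*)=C(A[1])$, with $d_A^2=0$ on $C_0$ and $C_1$ encoding anchor-compatibility and the Jacobi identity. Where you take a genuinely different (though equivalent) route is in recovering the algebroid structure from $Q$: the paper, citing \cite{vaintrob} and \cite{yvette}, uses the derived bracket construction $[a,a']_A=[[Q_A,a],a']$ and $\rho(a)f=[[Q_A,a],f]$ of eq.~\eqref{eq:br-rho}, working with graded commutators of vector fields under the identification $\chi_{-1}(A[1])=\Gamma(A)$ of Lemma \ref{N1}, whereas you dualize, reading $\rho$ off $Q|_{C_0(\cM)}$ and the bracket off $Q|_{C_1(\cM)}$ via the Cartan formula. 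The two are related by standard Cartan-calculus identities, and the tensoriality checks you flag as the ``main obstacle'' (that $\langle Qf,\cdot\rangle$ is $C^{\infty}(M)$-linear and that your bracket formula is $C^{\infty}(M)$-linear in $\xi$) are exactly where the degree-$1$ derivation property of $Q$ does the work, so your verification is complete. What the derived-bracket formulation buys is that it is the form the paper actually uses downstream (e.g.\ in Lemma \ref{classical}), and it folds the Leibniz and Jacobi verifications into the graded Jacobi identity on $\chi(A[1])$ together with $[Q,Q]=0$; what your dual formulation buys is a fully explicit, self-contained proof of a statement the paper itself leaves to the references.
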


We describe the  correspondence using the derived bracket
construction.
%\begin{proof}[Sketch of proof:]
By Lemma \ref{N1} there is a bijection between vector bundles and
degree 1 N-manifolds. If $A$ is a Lie algebroid, then the homological vector field  is just the   Lie algebroid differential acting on $ \Gamma(\wedge^{\bullet}A^*)=C(A[1])$. Conversely, if
$(\cM:=A[1], Q_A)$ is an NQ-manifold, then
the Lie algebroid structure on $A$ can be recovered by the derived bracket
construction \cite[\S 4.3]{yvette}: using the
 identification $\chi_{-1}(\cM)= \Gamma(A)$ recalled in Lemma \ref{N1}, we define
\begin{equation}\label{eq:br-rho}
[a,a']_A=[[Q_A,a],a'], \;\;\;\;\quad  \rho(a)f=[[Q_A,a],f],
\end{equation}
where $a,a'\in \Gamma(A)$ and $f\in C^{\infty}(M)$.

In coordinates the correspondence is as follows.
% $Q_A$ is the homological vector field encoding the Lie algebroid structure on $A$:
Choose coordinates $x_{\alpha}$ on $M$ and a frame of sections $e_i$ of $A$, inducing (degree $1$) coordinates $\xi_i$ on the fibers of $A[1]$. Then
\begin{equation}\label{QA}
Q_A=\frac{1}{2} \xi^j\xi^i
c_{ij}^k(x)\pd{\xi_k}+\rho_{i}^{\alpha}(x)\xi^i\pd{x_{\alpha}}
\end{equation}
where $[e_i,e_j]_A=c_{ij}^k(x)e_k$ and the anchor of $e_i$ is $\rho_{i}^{\alpha}(x)\pd{x_{\alpha}}$.
%\end{proof}

 Viewing Lie algebroids as NQ-manifolds proves to be very valuable. For
example, the definition of  {\em Lie
algebroid morphism} $A\to A'$ is quite involved, but in terms of NQ-manifolds it is simply a morphism of N-manifolds from $A[1]$ to $A'[1]$ (i.e., a
   morphism of graded commutative algebras $C(A'[1]) \to C(A[1])$) which respects
homological vector field. Similarly, the notion of double Lie
algebroid is quite involved, but it simplifies once expressed in terms
of homological vector fields (Def. \ref{doublealgoid}).

\subsection{Strict Lie 2-algebra actions on NQ-1 manifolds}\label{sec:actions}

In this subsection we define strict Lie 2-algebra actions on   NQ-1
manifolds in \S \ref{defact}.  Then in \S \ref{subsec:vfnq1} we study actions from an ordinary differential geometry view point, that is, we view NQ-1 manifolds as Lie algebroids  and describe the action  by ordinary differential geometry data.

\subsubsection{Definition of strict Lie 2-algebra action}\label{defact}
 
\begin{defi}\label{strict}
%A \emph{Lie 2-algebra} is an $L_{\infty}$-algebra concentrated in degrees $-1$ and $0$.
A \emph{strict Lie 2-algebra} (in the sense of \cite{baez:2algebras}) is a DGLA (see Def. \ref{dgla}) concentrated in degrees $-1$ and $0$.
%, or equivalently a Lie 2-algebra for which $[\dots ]_3=0$.
\end{defi}
Concretely,  a strict Lie 2-algebra can be described as follows. It is given by $\h[1]\oplus \g$ (where $\h,\g$ are ordinary vector space) together with a Lie algebra structure on $\g$, a left $\g$-module structure on $\h$ (both of which we denote by $[\cdot,\cdot]$, and a linear map $\delta \colon \h \to \g$ satisfying $\delta[v,w]=[v, \delta w]$ for all $v\in \g, w\in \h$. 

\begin{defi} \label{strictaction}
A \emph{strict} action of a strict Lie 2-algebra $\h[1]\oplus \g$ on an NQ-1 manifold $\cM$ is a morphism of DGLAs
$$\mu: \h[1]\oplus \g \to \chi(\cM),$$
i.e., a degree-preserving linear map
%for which \eqref{constr1} and \eqref{constr2} hold.
preserving the differentials and Lie brackets.
\end{defi}

The NQ-1 manifold $\cM$ is equal to $A[1]$ for some Lie algebroid $A$, by Lemma \ref{nq1la}.
We spell out what Def. \ref{strictaction} means: we have maps
\begin{align*}
\mu|_{\h[1]} \colon \h[1] \to& \chi_{-1}(A[1])\\
\mu|_{\g} \colon \g \to& \chi_{0}(A[1])
\end{align*}
such that
\begin{align}
\label{c}\mu(\delta w)=&d_Q(\mu (w)) \;\;\;\;\;\;\; \text{ for all }w\in \h[1],\\
\label{d}0=&d_Q(\mu (v))  \;\;\;\;\;\;\;\;\text{ for all }v\in \g,\\
\label{a}\mu[v,w]=&[\mu (v),\mu (w)]  \;\;\;\text{ for all }v\in \g,w\in \h[1],\\
\label{b}\mu[v_1,v_2]=&[\mu (v_1),\mu (v_2)]\;   \text{ for all }v_i\in \g.
\end{align}

\begin{remark}
  By eq. \eqref{c}, the image of the action map $\mu$ will be contained in the truncated DGLA
$\chi_{-1}(\cM) \oplus \chi_0^Q(\cM)$ (see eq.
 \eqref{eq:vf-m}).  Hence  Lie 2-algebra
actions on $\cM$ can be formulated using only the truncated DGLA.
\end{remark}

\subsubsection{DGLAs and  Lie algebra crossed modules}\label{subsec:vfnq1}
%The DGLA of vector fields on a NQ-1 manifold

Let $\h[1]\oplus \g$ be a strict Lie 2-algebra. Recall that $[w,w']_{\delta}:=[\delta w,w']$ makes $\h$ into a Lie algebra, which we denote by $\h_{\delta}$.
Let $A[1]$ be an NQ-1 manifold. Using the identifications $\chi_{-1}(A[1])= \Gamma(A)$ and
$ \chi_{0}(A[1])\cong CDO(A)$ given in Lemma \ref{N1}, we obtain a characterization of strict actions (Def. \ref{strictaction}) in terms of classical geometric objects.

\begin{lemma}\label{classical} Let  $\mu: \h[1]\oplus \g \to \chi(\cM)$ be   a linear map.
Then $\mu$ is a morphism of DGLAs
%(hence a strict action, see Def. \ref{strictaction})
iff
\begin{itemize}
\item  $\mu|_{\g} \colon \g \to  CDO(A)$ is an infinitesimal action of $\g$ on $A$ by infinitesimal Lie algebroid automorphisms, with the property that $\delta(w)$ acts as $[\mu (w),\cdot]_A$ for all $w\in \h$
\item  $\mu|_{\h} \colon \h_{\delta} \to \Gamma(A)$
is a Lie algebra homomorphism which is equivariant w.r.t. the representation of $\g$ on $\h$ by $v\mapsto [v,\cdot]$ and the representation $\mu|_{\g}$ of $\g$ on $\Gamma(A)$.
\end{itemize}
\end{lemma}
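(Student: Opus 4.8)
The plan is to unwind the definition of a DGLA morphism $\mu \colon \h[1]\oplus\g \to \chi(\cM)$ into the four conditions \eqref{c}--\eqref{b} already listed, and then translate each of these four conditions, one by one, through the identifications of Lemma \ref{N1}, namely $\chi_{-1}(\cM)=\Gamma(A)$, $\chi_0(\cM)\cong CDO(A^*)\cong CDO(A)$, and the description of $d_Q=[Q_A,-]$ via the derived bracket formula \eqref{eq:br-rho}. Since $\mu$ is degree-preserving, it automatically splits as $\mu|_\g \colon \g \to \chi_0(\cM)$ and $\mu|_{\h[1]} \colon \h[1]\to \chi_{-1}(\cM)=\Gamma(A)$, so there is nothing to prove about the bracket components $[\h[1],\h[1]]$ or $[\g,\g]$ landing in the wrong degree; the content is entirely in matching up \eqref{c}--\eqref{b} with the two bulleted statements.

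First I would treat condition \eqref{b}: under $\chi_0(\cM)\cong CDO(A)$ it says exactly that $\mu|_\g$ is a Lie algebra homomorphism $\g \to CDO(A)$, i.e. an infinitesimal action of $\g$ on $A$ by CDOs. To see that these CDOs are \emph{infinitesimal Lie algebroid automorphisms}, I would use condition \eqref{a}: combined with the derived bracket description of the algebroid bracket $[a,a']_A=[[Q_A,a],a']$ and the anchor $\rho(a)f=[[Q_A,a],f]$, equation \eqref{a} together with the fact (Lemma \ref{N1}) that a degree $0$ vector field $X_0\in\chi_0(\cM)$ acts on $\chi_{-1}(\cM)=\Gamma(A)$ precisely by the bracket $[X_0,-]$ under the dual CDO identification, forces $\mu(v)$ to be a derivation of the algebroid bracket and to be compatible with the anchor; this is the standard characterization of an infinitesimal Lie algebroid automorphism. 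The derived-bracket bookkeeping here is the step most likely to require care, because one has to keep track of which of the two dual identifications $\chi_0(\cM)\cong CDO(A^*)$ vs.\ $CDO(A)$ is being used, and the last sentence of Lemma \ref{N1} is exactly what pins this down; getting the signs and the direction of dualization right is the real (if modest) obstacle.

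Next, condition \eqref{c}, $\mu(\delta w)=d_Q(\mu(w))=[Q_A,\mu(w)]$: since $\mu(w)\in\chi_{-1}(\cM)=\Gamma(A)$ corresponds to the section $\mu|_\h(w)$, the derived bracket formula \eqref{eq:br-rho} gives that $[Q_A,\mu(w)]$, as a degree $0$ vector field, acts on $\Gamma(A)$ as $[\mu|_\h(w),-]_A$ and on $C^\infty(M)$ as $\rho(\mu|_\h(w))$; hence $\mu(\delta w)=[\mu|_\h(w),\cdot]_A$, which is precisely the asserted property in the first bullet. Condition \eqref{d}, $0=[Q_A,\mu(v)]$ for $v\in\g$, is equivalent to saying the degree $0$ vector field $\mu(v)$ commutes with $Q_A$, which under $\chi_0(\cM)\cong CDO(A)$ is exactly the statement that the CDO $\mu|_\g(v)$ preserves the Lie algebroid structure (compare the discussion after Lemma \ref{nq1la} and the remark on Lie algebroid morphisms); so \eqref{d} is subsumed in "infinitesimal Lie algebroid automorphism" and need not be listed separately.

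Finally, condition \eqref{a}, $\mu[v,w]=[\mu(v),\mu(w)]$ for $v\in\g$, $w\in\h[1]$: the left side is $\mu|_\h([v,w]_\g)\in\Gamma(A)$, and $[\mu(v),\mu(w)]$ is the bracket of a degree $0$ with a degree $-1$ vector field, which under the identifications is the CDO $\mu|_\g(v)\in CDO(A)$ applied to the section $\mu|_\h(w)\in\Gamma(A)$. Thus \eqref{a} reads $\mu|_\h([v,w])=\mu|_\g(v)\bigl(\mu|_\h(w)\bigr)$, i.e. $\mu|_\h$ is equivariant for the $\g$-action on $\h$ by $v\mapsto[v,-]$ and the $\g$-action $\mu|_\g$ on $\Gamma(A)$ — exactly the second half of the second bullet. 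It remains to check that $\mu|_\h \colon \h_\delta \to \Gamma(A)$ is a Lie algebra homomorphism for the bracket $[w,w']_\delta=[\delta w,w']$; but $\mu|_\h([w,w']_\delta)=\mu([\delta w,w'])=[\mu(\delta w),\mu(w')]$ by \eqref{a} applied with $\delta w\in\g$, and by \eqref{c} this equals $[\,[\mu|_\h(w),-]_A\,]$ evaluated on $\mu|_\h(w')$, i.e.\ $[\mu|_\h(w),\mu|_\h(w')]_A$, as desired. Conversely, reading all of these equivalences backwards recovers \eqref{c}--\eqref{b} from the two bullets, so the two sets of conditions are equivalent; I would write the proof as a short chain of "if and only if" translations rather than a forward/backward split, since each step is reversible. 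I expect the only genuine friction to be the consistent use of the two dual CDO identifications and the derived-bracket formula, which I would flag by citing Lemma \ref{N1} and \eqref{eq:br-rho} at each use.
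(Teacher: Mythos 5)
Your proposal is correct and follows essentially the same route as the paper: translate each of the four conditions \eqref{c}--\eqref{b} through the identifications of Lemma \ref{N1} and the derived bracket formula \eqref{eq:br-rho}, obtaining the two bullets, and then observe that the Lie algebra homomorphism property of $\mu|_{\h}$ follows from \eqref{a} and \eqref{c} via $\mu[w_1,w_2]_\delta=\mu[\delta w_1,w_2]=[\mu(\delta w_1),\mu(w_2)]=[[Q_A,\mu(w_1)],\mu(w_2)]=[\mu(w_1),\mu(w_2)]_A$, exactly as in the paper's proof. The converse by reversibility is also as in the paper.

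One passage should be corrected, though it does not break the argument. In your second paragraph you claim that condition \eqref{a} is what forces $\mu(v)$ to be an infinitesimal Lie algebroid automorphism. That is not right: \eqref{a} only constrains how $\mu(v)$ acts on the image of $\mu|_{\h}$ inside $\Gamma(A)$, which in general does not span $\Gamma(A)$, so it cannot by itself yield the derivation property of $\mu(v)$ with respect to $[\cdot,\cdot]_A$ or compatibility with the anchor. The correct source is condition \eqref{d}: $[Q_A,\mu(v)]=0$ combined with the derived bracket $[a,a']_A=[[Q_A,a],a']$ and the Jacobi identity gives $\mu(v)[a,a']_A=[\mu(v)a,a']_A+[a,\mu(v)a']_A$ and the anchor compatibility. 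You do state this correctly in your next paragraph (where you note \eqref{d} is exactly the automorphism condition), so the proof as a whole is sound; just delete the appeal to \eqref{a} at that point and reserve \eqref{a} for the equivariance of $\mu|_{\h}$, which is its actual content.
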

\begin{proof} Assume that $\mu$ is a strict action.
\eqref{b} means that $\mu|_{\g}$ is an infinitesimal action of $\g$ on the vector bundle  $A$, and \eqref{d} means that the action is by infinitesimal Lie algebroid automorphisms.

\eqref{c} means that $\delta w$ acts by  $[\mu (w),\cdot]_A$ for all $w\in \h$, by the derived bracket construction.

\eqref{a} means that $\mu|_{\h}$ is an equivariant map.

Further $\mu|_{\h}$ is a Lie algebra morphism:$$\mu[w_1,w_2]_{\delta}=\mu[\delta w_1,w_2]=[\mu(\delta w_1),\mu(w_2)]=
[[Q,\mu( w_1)],\mu(w_2)]=[\mu(w_1),\mu(w_2)]_A,$$
where  the second equality holds by \eqref{a}  and the third equality   by  \eqref{c}.

The converse implication is obtained reversing the argument.
\end{proof}

The idea behind  Lemma \ref{classical} is the concept of  crossed
module, which might  be more familiar to the reader than strict Lie
2-algebras, even though they are equivalent concepts (see Prop
\ref{bij}).
\begin{defi}
A \emph{crossed module of Lie algebras} $(\h,\g,\delta,\alpha)$
consists  of a Lie algebra morphism $\delta: \h \to \g $ and an
(left) action  of $\g$ on $\h$ by derivations, i.e. $\alpha : \g \to
Der(\h) $, such that
\[
\delta (\alpha  (v) (w)) = [ v, \delta (w)], \quad \quad \alpha ( \delta (w) ) w' =
[w, w'].
\]
where $v\in \g$, $w,w'\in \h$.
\end{defi}

A classical result  (see \cite[Thm.
36]{baez:2algebras}) is that
\begin{prop}\label{bij}
There is a one-to-one correspondence between  strict Lie 2-algebras     and
crossed modules of  Lie algebra.
\end{prop}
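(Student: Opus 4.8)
The plan is to construct explicitly the two maps in both directions between the set of strict Lie 2-algebras on a fixed graded vector space $\h[1]\oplus\g$ and the set of crossed modules of Lie algebras $(\h,\g,\delta,\alpha)$, and then to verify that they are mutually inverse. Given a strict Lie 2-algebra, i.e.\ a DGLA $L=\h[1]\oplus\g$ with bracket $[\cdot,\cdot]$ and differential $\delta$, I would first unpack the structure concretely as in the paragraph following Def.~\ref{strict}: the degree-$0$ part $[\cdot,\cdot]|_{\g\times\g}$ gives a Lie algebra structure on $\g$; the mixed bracket $[\cdot,\cdot]|_{\g\times\h}$ gives, for each $v\in\g$, a linear endomorphism $\alpha(v):=[v,\cdot]$ of $\h$; and the differential restricts to a linear map $\delta:\h\to\g$ (the $\g\to\h$ and $\h[1]\to\h[1]$ components of $\delta$ vanish for degree reasons, and the $\g\to\g$ component vanishes by $\delta(L_0)\subset L_1=\{0\}$). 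I claim $(\h,\g,\delta,\alpha)$ is a crossed module.

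The verification is then a routine bookkeeping of the DGLA axioms, and the key step is to check the four conditions in Def.~\ref{strict}/crossed module carefully, paying attention to signs. First, the graded Jacobi identity with $a=v_1, b=v_2\in\g$ and $c=w\in\h$ shows that $\alpha$ is a Lie algebra action of $\g$ on the vector space $\h$. Second, applying the Jacobi identity with $a=v\in\g$ and $b=w,c=w'\in\h$ would a priori give information about the degree-$(-2)$ bracket $[\h,\h]\subset L_{-2}=\{0\}$, which is identically zero; so instead one gets that $\alpha(v)$ is a derivation of the \emph{Lie bracket on $\h$ induced by $\delta$}, namely $[w,w']_\delta:=[\delta w,w']$. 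Concretely: $\delta$ being a derivation of the bracket gives $\delta[\alpha(v)w]=\delta[v,w]=[v,\delta w]$ (using $\delta v=0$ and the sign $(-1)^{|v|}=1$), which is the first crossed-module axiom $\delta(\alpha(v)(w))=[v,\delta w]$. For the second axiom $\alpha(\delta w)(w')=[w,w']_\delta$, this is the \emph{definition} of $[\cdot,\cdot]_\delta$, so one must additionally show $[\cdot,\cdot]_\delta$ is a Lie bracket: skew-symmetry and Jacobi for $[\cdot,\cdot]_\delta$ follow from $\delta$ being a chain map into a Lie algebra together with $\delta\circ\delta=0$ and the first axiom — this is the ``Peiffer identity'' computation and is the only place where a small argument (rather than direct translation) is needed, hence the main obstacle, though it is entirely standard. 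Finally one checks $\delta:\h_\delta\to\g$ is a Lie algebra morphism, which follows again from the first axiom.

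Conversely, given a crossed module $(\h,\g,\delta,\alpha)$, I would define on $L:=\h[1]\oplus\g$ the bracket by $[v_1,v_2]:=[v_1,v_2]_\g$ for $v_i\in\g$, $[v,w]:=\alpha(v)(w)=-[w,v]$ for $v\in\g,w\in\h$, and $[w,w']:=0$ for $w,w'\in\h$ (forced by degree), and the differential $\delta:L\to L$ to be the given $\delta$ on $\h$ and zero on $\g$. Then I would check the DGLA axioms of Def.~\ref{dgla}: degree-preservation and graded skew-symmetry are immediate from the definitions; $\delta^2=0$ is trivial since $\delta^2$ lands in $L_1$ via $\g$ where $\delta=0$, and on $\h$ it is $\delta\circ\delta$ which is $0$ because... actually $\delta|_\h:\h\to\g$ followed by $\delta|_\g=0$ gives $0$; $\delta$ is a degree-$1$ derivation of the bracket — the only nontrivial instance is $\delta[v,w]=[\delta v,w]+(-1)^{|v|}[v,\delta w]$, i.e.\ $\delta(\alpha(v)w)=[v,\delta w]_\g$, which is the first crossed-module axiom; and the graded Jacobi identity splits by degree into: the $\g\g\g$ case (Jacobi in $\g$), the $\g\g\h$ case ($\alpha$ is an action), the $\g\h\h$ and $\h\h\h$ cases (both sides land in $L_{-2}$ or $L_{-3}$, hence vanish, but one must confirm this is consistent — it is, because $[\h,\h]=0$). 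I would then observe that these two constructions are manifestly inverse to each other, since each recovers the same underlying brackets and differential, completing the proof.
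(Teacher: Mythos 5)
Your proposal is correct and follows essentially the same route as the paper, which simply records the standard Baez--Crans correspondence by writing down the two explicit assignments $[w,w']_{\h}:=[\delta w,w']$, $\alpha(v)(w):=[v,w]$ and, conversely, $[v,w]:=\alpha(v)(w)$, $[w,w']:=0$, and noting they are mutually inverse. Your additional verifications (that $[\cdot,\cdot]_\delta$ is a Lie bracket, that $\alpha(v)$ is a derivation of it via the Jacobi identity applied to $v,\delta w,w'$, and the degree bookkeeping) are exactly the routine checks the paper leaves implicit.
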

Since a similar construction will show up again  in \S \ref{sec:lag}, we recall the  correspondence: A
strict Lie 2-algebras $(L_{-1}[1]\oplus L_0,\delta, [\cdot,\cdot])$ gives rise to a Lie algebra crossed
module with $\h = L_{-1}$ and $\g= L_{0}$ where
\[
\begin{split}
[w, w']_{\h} &:= [ \delta (w), w'] , \\
\alpha  (v) (w) & := [v, w],\\
[v, v']_{\g} & := [v, v']
\end{split}
\]
for $v,v'\in \g$, $w,w'\in \h$.
The Jacobi identity of $[\cdot,\cdot]$ gives the Jacobi identities of $[\cdot,\cdot]_\h$
and $[\cdot,\cdot]_\g$ and the remaining conditions for crossed modules.

On the other hand, a crossed module $(\h,\g,\delta,\alpha)$ gives rise to
a strict Lie-2 algebra  with
%$d=d\bt$,
  $L_{-1}=\h$, $L_0 = \g$, $\delta$ as a differential, and
\[
\begin{split}
[w, w'] & :=0, \\
[v, v'] & := [v, v']_{\g}, \\
[v, w] & = -[w,v] := \alpha  (v) (w)
\end{split}
\]
for $v,v'\in L_0$, $w,w'\in L_{-1}$.
The Jacobi identity of $[\cdot,\cdot]$ is implied by the Jacobi identities of $[\cdot,\cdot]_\h$
and $[\cdot,\cdot]_\g$ and various conditions for crossed modules.  

Consequently, 
there is a 1-1 correspondence between DGLA morphisms and crossed module morphisms. Thus Lemma \ref{classical} basically explicitly tells us that given  $\mu: \h[1]\oplus \g \to \chi(\cM)$ a linear map, then $\mu$ is a morphism of DGLAs
%(hence a strict action, see Def. \ref{strictaction})
iff $(\mu|_{\h},  \mu|_{\g})$ is a morphism of Lie algebra crossed modules from the crossed module  associated to  $\h[1]\oplus \g$ to the one associated to the the truncated DGLA $\chi_{-1}(A[1])\oplus\chi_{0}(A[1])^Q$.
%\begin{cor}
% Let  $\mu: \h[1]\oplus \g \to \chi(\cM)$ be   a linear map.
%Then $\mu$ is a morphism of DGLAs
%(hence a strict action, see Def. \ref{strictaction})
%iff $(\mu|_{\h},  \mu|_{\g})$ is a morphism of Lie algebra crossed modules from the crossed module  associated to the strict Lie-2 algebra $\h[1]\oplus \g$ to the one associated to the the truncated DGLA $\chi_{-1}(A[1])\oplus\chi_{0}(A[1])^Q$.
%\end{cor}
%\begin{proof}
For this, we only need to notice that the crossed module associated to   $\chi_{-1}(A[1])\oplus\chi_{0}(A[1])^Q$ by Lemma \ref{bij} is the quadruple given by $(\Gamma(A),[\cdot,\cdot]_A)$, the subset of $CDO(A)$ consisting of infinitesimal Lie algebroid automorphisms, the morphism $\delta(a)=[a,\cdot]_A$ and the natural action of $CDO(A)$ on $\Gamma(A)$.
% Hence the statement follows from Lemma \ref{classical}.
%\end{proof}

\section{Integration to global actions}\label{sec:glo}

The purpose of this section is to integrate the infinitesimal actions introduced in Def. \ref{strictaction}. We do so using the framework of Mackenzie's doubles, which we review in \S \ref{Mac}. In \S \ref{sec:lag} we display the objects integrating strict Lie 2-algebras, and in \S \ref{sec:act} -- the heart of this paper --  we integrate the corresponding strict actions.

Recall that one can differentiate a Lie groupoid $G_1 \rightrightarrows G_0$ to obtain
its Lie algebroid. This defines a functor from the category of
Lie groupoids to the category of Lie algebroids, called Lie functor. We refer to the inverse
process as ``integration''.

\subsection{Background on Mackenzie's doubles}\label{Mac}

In this subsection we recall the formalism of Mackenzie doubles and its extension by Mehta. We will use it in \S \ref{sec:lag} and \S \ref{sec:act} to integrate strict Lie 2-algebras and their actions.

Recall   that  one can apply the Lie functor to any Lie groupoid  to obtain its Lie algebroid.
Further, given a Lie algebroid, applying the degree shifting functor $[1]$ one obtains an NQ-1 manifold (Lemma \ref{nq1la}).
The formalism of Mackenzie doubles relates \emph{double Lie groupoids} (see Def. \ref{strlgd})
% which are Lie groupoids in {\bf StrLgd},
 to three other structures obtained  applying (horizontally or vertically) the Lie functor.
This was extended by Mehta \cite{rajthesis} who
applied (horizontally or vertically) the degree shifting functor $[1]$ to obtain NQ-manifolds with additional structures.
%\footnote{The functors $[1]$ mapping into double $Q$-manifolds (the lower right corner) are actually  defined on the category of \emph{super}algebroids \cite[Def. 4.1]{rajQalg}. This functor  induces an equivalence between the category of \emph{super}algebroids and the category of non-negatively graded vector bundles equipped with a homological vector field $Q$ quadratic in the fibers \cite[Rem. 4.7]{rajQalg}.}
 The situation is summarized in the following diagram taken from  \cite{rajthesis}:
\begin{equation}\label{rajdiaWHOLE}
\xymatrix{
\mbox{Double Lie groupoids} \ar^{\Lie_H}[r] \ar^{\Lie_V}[d] & \mbox{$\mathcal{LA}$-groupoids} \ar^{[1]}[r] \ar^{\Lie}[d] & \mbox{$Q$-groupoids} \ar^{\Lie}[d] \\
\mbox{$\mathcal{LA}$-groupoids} \ar^{\Lie}[r] \ar^{[1]}[d] & \mbox{Double Lie algebroids} \ar^{[1]_H}[r] \ar^{[1]_V}[d] & \mbox{$Q$-algebroids} \ar^{[1]}[d] \\
\mbox{$Q$-groupoids} \ar^{\Lie}[r] & \mbox{$Q$-algebroids} \ar^{[1]}[r] & \mbox{Double $Q$-manifolds}
}
\end{equation}
%\end{table}

\begin{remark}\label{invert}
In general it is not known whether the $\Lie$ functors appearing in the above diagram can  be inverted.
For instance, given a double Lie algebroid whose vertical Lie algebroids are integrable to Lie groupoids, it is not known if the integrating Lie groupoids form an $\mathcal{LA}$-groupoid.
The following question is also open: does a  $\mathcal{LA}$-groupoid for which the Lie algebroid structures are
integrable arise from a double groupoid? Partial answers to this problem were worked out in \cite{lucathesis}.
\end{remark}

The portion of diagram \eqref{rajdiaWHOLE} which is relevant to us is the following:
\begin{equation}\label{rajdia}
\colorbox{mygray}{
\xymatrix{
\mbox{Double Lie groupoids} \ar^{\Lie_H}[r]  & \mbox{$\mathcal{LA}$-groupoids}   \ar^{\Lie}[d] &   \\
 & \mbox{Double Lie algebroids} \ar^{\;\;\;\;\;[1]_H}[r] & \mbox{$Q$-algebroids}  }
      }
\end{equation}
We define the  objects appearing  in it.
We point out that Mehta \cite{rajthesis} works entirely in the category of graded manifolds, whereas we want to assume that the double Lie groupoids appearing in
\eqref{rajdia} consist of  ordinary manifolds. This explain why our definitions below are more restrictive than those of  \cite{rajthesis}.

\begin{defi}\label{strlgd} Let {\bf StrLgd} be the category of Lie groupoids with
  strict morphisms\footnote{That is, usual Lie groupoid
    morphisms.  We will not make use of the notion of generalized morphism, i.e.    Hilsum-Skandalis bimodule.}. A
\emph{double Lie groupoid} is a groupoid object in
{\bf StrLgd}. A
 { \em strict Lie 2-group} is a group object in {\bf StrLgd}. A \emph{strict Lie 2-group action} is a group action in {\bf StrLgd}.
\end{defi}

\begin{defi}\label{Lagrp}  Let {\bf LA} be the category of Lie algebroid and Lie algebroid morphisms.  A \emph{$\mathcal{LA}$-groupoid} is a groupoid object in
{\bf LA}.
An \emph{$\mathcal{LA}$-group} is a group\ object in  {\bf LA}. An \emph{$\mathcal{LA}$-group action} on
a Lie algebroid is a group action in {\bf LA}.\end{defi}

 In plain English, an $\mathcal{LA}$-group is a Lie
algebroid $C$ endowed with an additional group structure. For
example, there is a multiplication
\[ m: C \times C \to C \]
which is a Lie algebroid morphism and satisfies the (strict) associativity diagram.  There are also an identity
morphism and an inverse morphism
\[ e: pt \to C, \quad i: C \to C, \]
which satisfy the group axioms. If $C$ is a Lie algebroid over $N$, then these axioms say exactly that
both $C$ and $N$ are Lie groups, and $C \to N$ is a group morphism, i.e. $N$ has an induced group
structure from $C$. 
%More over the group structure maps on $C$ are Lie algebroid morphisms.

\begin{remark} A Q-groupoid is a groupoid object in the category of NQ-1 manifolds. We will not make use of this notion.
By the correspondence between Lie algebroids and NQ-1 manifolds (see Lemma \ref{nq1la}) it is clear that $\mathcal{LA}$-group(oid)s correspond to
Q-group(oid)s.\end{remark}

\begin{defi} \label{doublealgoid}
A \emph{double Lie algebroid} \cite{MacKdoublealgebroid}  is a double vector bundle
\begin{equation}
\xymatrix{
D\ar[d]  \ar[r] & B\ar[d] \\
A \ar[r]& M}
\end{equation}
such that both vertical sides and both horizontal sides are Lie
algebroids, subject to certain compatibility conditions (see for
instance \cite[\S 1]{vorover}). By  \cite[Thm. 1]{vorover}, the
compatibility conditions are equivalent to the following condition. If
we apply the $[1]$-functor to the vertical sides, to obtain a vector
bundle of graded manifolds $D[1]_A \to B[1]$, and then we apply again
the $[1]$-functor to it, the resulting degree $2$ graded manifold
$(D[1]_A)[1]$ will be endowed with homological vector fields
encoding the Lie algebroid structures on $D\to A$ and $D\to B$; the
condition is that  these two vector fields commute.
\end{defi}

\begin{defi} \label{Q-algoid}
A \emph{Q-algebroid} \cite[Def. 4.22]{rajQalg}  is a {\bf N1}-algebroid\footnote{Mehta refers to it as ``superalgebroid'' and allows for $\Z$-graded manifolds.}
\cite[ Def. 4.1]{rajQalg} $\cA\rightarrow \cM$ with a homological
vector field $Q$ which is \emph{morphic} \cite[ Def. 4.14]{rajQalg}.
A Q-algebroid for which $\cM$ is a point is called  a
\emph{Q-algebra}.
\end{defi}

Q-algebras are exactly the same thing as strict Lie 2-algebras, as we will show in Lemma \ref{DGLAdoubleQ}.

Definition \ref{Q-algoid}   needs some explanation. An {\bf N1}-vector
bundle\footnote{Mehta \cite{rajQalg} refers to it as ``super vector bundle'' and allows for arbitrary $\Z$-graded manifolds. In our notation, the prefix ``{\bf N1}'' refers to the category of degree 1 N-manifolds.} 
  \cite[ Def. 2.1]{rajQalg} $\cE\to \cM$ consists of two degree 1  N-manifolds and a surjection between them, subject to a trivialization condition.
  % (In this note both $\cE$ and $\cM$ will be degree 1 graded manifolds, hence of the simple form described in Lemma \ref{N1}).
  A section \cite[ Def. 2.4]{rajQalg} is just a map of N-manifolds $s \colon \cM\to \cE$ (not necessarily degree-preserving) which composed with the projection equals $Id_{\cM}$. A {\bf N1}-vector bundle $\cE\to \cM$, endowed with a  (degree preserving)
 morphism from $\Gamma(\cE) \to \chi(\cM)$ and bracket $\Gamma(\cE)\times \Gamma(\cE) \to \Gamma(\cE)$ satisfying the graded Leibniz rule and Jacobi identity,
forms a {\bf N1}-algebroid \cite[ Def. 4.1]{rajQalg}. An example  \cite[Ex. 2.4.4]{rajthesis} is given by the action algebroid induced from a morphism of graded Lie algebras into $\chi(\cM)$, for $\cM$ a degree 1 N-manifold. Graded Lie algebras concentrated in  degrees $-1$ and $0$ are exactly the  {\bf N1}-algebroids for which the base $\cM$ is a point.

A vector field  on a  {\bf N1}-algebroid  $\cA\rightarrow \cM$ is
\emph{morphic} \cite[ Def. 4.14]{rajQalg} if it is linear in on the
fibers (in the sense that its action on a function linear on the
fibers is linear again)  and, viewed as a vector field on $\cA[1]$,
it commutes with $Q_{\cA}$.  Here $Q_{\cA}$ is the homological
vector field on $\cA[1]$ which, by virtue of \cite[Thm.
4.6]{rajQalg}, encodes the  {\bf N1}-algebroid structure on
$\cA\rightarrow \cM$.

%%%%%%%%%%%%%%%%

\subsection{Integrating strict Lie 2-algebras: $\mathcal{LA}$-groups and 2-groups}\label{sec:lag}
%\section{LA-group actions on Lie algebroids}
%NQ-1-group actions on NQ-1 manifolds, a.k.a.
%\subsection{LA-groups}
In this subsection we describe the objects that integrate a strict Lie-2 algebra (Def. \ref{strict}), namely strict 2-groups (Def. \ref{strlgd}) and $\mathcal{LA}$-groups (Def. \ref{Lagrp}).  
The idea is  
\begin{itemize}
\item   \S \ref{whatsl2}: show that strict Lie-2 algebras fit in the framework of diagram \eqref{rajdia}  
\item  \S \ref{intDGLA}: chase back in   diagram \eqref{rajdia} to obtain the integrating objects. 
\end{itemize}

\subsubsection{What is an integration of a strict Lie 2-algebra?}\label{whatsl2}

An integration of a strict Lie 2-algebra $\h[1]\oplus {\g}$ is {\em defined} to be a
strict Lie 2-group \cite{baez:2gp} whose corresponding crossed module differentiates
to the  crossed module corresponding to $\h[1]\oplus {\g}$  (Lemma \ref{bij}). 

Here we provide an integration procedure by 
viewing a strict Lie
2-algebras as a $Q$-algebroid\footnote{Equivalently
we could view strict Lie 2-algebras  as
double Q-manifolds or as double Lie algebroids.} (Lemma
\ref{DGLAdoubleQ}). This point of view has the advantage of  providing a  hint for what the right notion of integration of a strict action of a strict Lie 2-algebra  should be (see Def. \ref{strictaction}).

\emptycomment{
More precisely, in this subsection we show that strict Lie
2-algebras can be viewed as a $Q$-algebroid\footnote{Equivalently
we could view strict Lie 2-algebras  as
double Q-manifolds or as double Lie algebroids.} (Lemma \ref{DGLAdoubleQ}). We then \emph{define} their integration to be a double groupoid or $\mathcal{LA}$-groupoid which, upon applying the relevant functors in the diagram  \eqref{rajdia}, give back the original $Q$-algebroid.
We will identify the integrations in the next subsection by ``chasing back'' in   diagram  \eqref{rajdia}.}

%%%%%%%%%%%%%%%%
Given a finite dimensional $\Z$-graded vector space $L$,
a DGLA structure
 on $L$ can be encoded by means of a  homological
  vector field $Q$ on $L[1]$ (see Def. \ref{NQm}).
We recall
the procedure to recover the DGLA structure on   $L$ from $Q$; it is a special case of Voronov's \emph{higher derived brackets} construction  \cite[Ex. 4.1]{voro}. We have
\begin{equation*}\delta v= [Q,\iota_{v}]|_0\;\;\;\;\;\text{             and               } \;\;\;\;\;[v_1,v_2]_L=(-1)^{|v_1|}[[Q,\iota_{v_1}],\iota_{v_2}]|_0
\end{equation*}
where $|v|$ is the degree of $v\in L$,  $\iota_{v}$ is the induced constant vector field   on $L[1]$,  and ``$|_0$'' denotes the evaluation at the origin in $L[1]$.

 %%%%%%%%%%%
 \begin{remark}\label{linqaud} To make things more explicit, we express the above construction in
 coordinates.
%For all homogeneous $e \in L$ denote by  $\iota_{e}$ the corresponding vector field on $L[1]$.
%The differential $\delta:=[\cdot]_1$ and bracket $[\cdot,\cdot]_L:=[\cdot,\cdot]_2$ on $L$ are  given by
%$$\iota_{\delta e}=[Q,\iota_{e}]|_0\;\;\;\;\;\;\;\text{             and               }\;\;\;\;\;\;\;\;
%\iota_{[e_1,e_2]_L}=-(-1)^{|e_1|}[[Q,\iota_{e_1}],\iota_{e_2}]|_0.$$
%The coordinate expressions are the following.
If $\{e_i\}$ is a basis of $L$ and $\xi^i$ are the corresponding coordinates on $L[1]$, then $Q$ is at most quadratic:
\begin{align}
Q= Q_{\delta}+Q_{br}=\left(\xi^iQ^k_{i}+\frac{1}{2}\xi^j\xi^iQ^k_{ij} \right) \pd{\xi^k}
\end{align}
where we denote by
 $Q_{\delta}$ and $Q_{br}$ respectively  the linear and quadratic component of $Q$.
One has
%\footnote{These are the formulas in \cite[\S 4]{voro} after application of the   isomorphism \eqref{deca}.}
\begin{equation}\label{qd}
\delta e_i=(-1)^{|e_i|}Q^k_{i}e_k \;\;\;\;\;\;\text{             and               }\;\;\;\;\;\;\;\; [e_i,e_j]_L=(-1)^{|e_j|}Q^k_{ij}e_k.
\end{equation}
\end{remark}

\begin{lemma}\label{DGLAdoubleQ} There is a bijection between strict Lie 2-algebras   and Q-algebras,
given by $(L, [\cdot,\cdot]_L, \delta)\mapsto (L, [\cdot,\cdot]_L, -Q_{\delta})$.

\end{lemma}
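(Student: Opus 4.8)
The plan is to unwind both sides of the claimed bijection into concrete data and check they match, using the explicit coordinate formulas of Remark~\ref{linqaud}. Recall that a strict Lie 2-algebra is a DGLA $L = L_{-1}[1] \oplus L_0$ concentrated in degrees $-1,0$ (Def.~\ref{strict}), while a Q-algebra is a Q-algebroid (Def.~\ref{Q-algoid}) over a point, i.e.\ an {\bf N1}-algebroid $\cA \to \mathrm{pt}$ together with a morphic homological vector field. As remarked after Def.~\ref{Q-algoid}, an {\bf N1}-algebroid over a point is exactly a graded Lie algebra concentrated in degrees $-1$ and $0$; so the underlying object of a Q-algebra is such a graded Lie algebra $L$, and the remaining datum is a morphic homological vector field $Q$ on $L[1]$. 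First I would make precise what ``morphic'' means in this degenerate case: since the base is a point, linearity on the fibers is automatic, and the condition reduces to $[Q, Q_{\cA}] = 0$, where $Q_{\cA}$ is the homological vector field on $L[1]$ encoding the graded Lie bracket of $L$ via \cite[Thm.~4.6]{rajQalg} (equivalently, via the higher derived bracket formula recalled before Remark~\ref{linqaud}).

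The key computation is then bookkeeping in coordinates. Fix a homogeneous basis $\{e_i\}$ of $L$ with dual degree-shifted coordinates $\xi^i$ on $L[1]$. By Remark~\ref{linqaud}, any homological vector field $Q$ on $L[1]$ decomposes as $Q = Q_\delta + Q_{br}$ with $Q_\delta$ linear and $Q_{br}$ quadratic, and the two pieces recover, respectively, a degree-$1$ map $\delta\colon L \to L$ and a degree-$0$ bracket on $L$ via \eqref{qd}. The condition $[Q,Q]=0$ splits by homogeneity into: $[Q_\delta, Q_\delta]=0$ (i.e.\ $\delta^2 = 0$), $[Q_\delta, Q_{br}]=0$ (i.e.\ $\delta$ is a derivation of the bracket), and $[Q_{br},Q_{br}]=0$ (i.e.\ the bracket satisfies Jacobi). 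Here the degree restriction to $\{-1,0\}$ forces $Q_\delta$ to send the degree-$1$ coordinates to degree-$2$ terms, but there are no degree-$2$ coordinates, so $Q_\delta$ acts only on... — more carefully, $\delta$ can only be nonzero from $L_{-1}$ to $L_0$, matching exactly the data of a strict Lie 2-algebra. So the data $(L, [\cdot,\cdot]_L, \delta)$ of a strict Lie 2-algebra and the data $(L, Q)$ of a Q-algebra are in bijection; the only thing to pin down is the sign, and the claimed correspondence $\delta \leftrightarrow -Q_\delta$ is read off directly from the first equation of \eqref{qd} together with the sign convention in Def.~\ref{strictaction}/\eqref{c} (where $\mu(\delta w) = d_Q \mu(w) = [Q,\mu(w)]$), so that the differential on $\chi(\cM)$ is $[Q,-]$ rather than $-[Q,-]$. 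Running \eqref{qd} on degree $-1$ generators gives $\delta e_i = (-1)^{|e_i|} Q^k_i e_k = -Q^k_i e_k$, which is exactly the sign in the statement.

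What remains is to check that this assignment is functorial and bijective: injectivity and surjectivity are immediate from the coordinate description (every $\delta$ and bracket arise, and distinct ones give distinct $Q$), and compatibility of morphisms follows because a morphism of {\bf N1}-algebras over a point is a degree-preserving linear map intertwining the homological vector fields, which unwinds to a DGLA morphism intertwining the $\delta$'s (up to the uniform sign, which cancels). I expect the only genuinely delicate point to be tracking the Koszul signs: one must be careful that the identification $\chi_{-1}(L[1]) \cong L_{-1}$, $\chi_0(L[1]) \supset (\text{part of } L_0)$ via constant/linear vector fields introduces the factors $(-1)^{|e_i|}$, and that the ``morphic = commutes with $Q_\cA$'' condition of Def.~\ref{Q-algoid} really does reproduce all three Jacobi-type identities rather than some twisted variant. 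Everything else is routine unwinding of Mehta's definitions \cite{rajQalg} in the case of a point base.
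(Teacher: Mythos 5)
Your proof is correct and follows essentially the same route as the paper's: decompose the homological vector field encoding the DGLA as $Q_{\delta}+Q_{br}$ and identify the morphic condition with $[Q_{\delta},Q_{br}]=0$ (the paper obtains the vanishing of the individual terms from the derived-bracket/$L_\infty$ observation, you from splitting $[Q,Q]=0$ by polynomial degree --- a cosmetic difference). One small correction: fiberwise linearity of the morphic vector field is \emph{not} automatic over a point (e.g.\ $Q_{br}$ itself commutes with $Q_{\cA}=Q_{br}$ but is quadratic); it is imposed by Def.~\ref{Q-algoid} and is genuinely needed for the bijection, so it should be invoked rather than dismissed.
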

\begin{proof}
Recall from Def. \ref{Q-algoid} that a Q-algebra is a graded Lie algebra concentrated in degrees $-1$ and $0$, together with a morphic vector field. 

Let  $(L, [\cdot,\cdot]_L, \delta)$ be a strict Lie 2-algebra.
All three of $Q_{br}$, $Q_{\delta}$ and $Q_{br}+Q_{\delta}$ are self-commuting, because  by the derived bracket construction they define $L_{\infty}$-structures on $L$ (namely the graded Lie bracket $ [\cdot,\cdot]_L$, the differential $\delta,$ and the DGLA structure).
In particular $Q_{\delta}$ is a linear homological vector field on $L[1]$. Further 
 $$[Q_{br}+Q_{\delta},Q_{br}+Q_{\delta}]=[Q_{br},Q_{br}]+[Q_{\delta},Q_{\delta}]+2[Q_{br},Q_{\delta}]$$ implies that $[Q_{br},Q_{\delta}]=0$,
 that is, $Q_{\delta}$ is a morphic vector field. Clearly $-Q_{\delta}$ is also a morphic vector field, so $(L, [\cdot,\cdot]_L, -Q_{\delta})$ is a Q-algebra. Reversing the argument we see that all Q-algebras arise this way.
\end{proof}

\subsubsection{Integrating the strict Lie 2-algebra}\label{intDGLA}

Let $\h[1]\oplus \g$ be a strict Lie 2-algebra. By Lemma \ref{DGLAdoubleQ} we can view it as  a $Q$-algebra. In this subsection we argue that  it
lies in the image of the functors appearing in the diagram \eqref{rajdia},
as follows\footnote{The notation is chosen so to describe the \emph{vertical} (Lie group or Lie algebra) structures. The bottom horizontal sides of the  double Lie groupoid, $\mathcal{LA}$-groupoid, etc appearing in diagram   \eqref{rajdiaspecial} are just points, so they are omitted.}:
\begin{equation}\label{rajdiaspecial}
\colorbox{mygray}{
\xymatrix{
%\mbox{2-group}\colon
(H \rtimes G)\rightrightarrows G  \;\;\;\ar^{\Lie_H}[r]
%\ar^{\Lie_V}[d]
&
%\mbox{LA-group}\colon
\;\;\; (\h\rtimes G) \rightarrow   G
%\ar^{[1]}[r]
\ar^{\Lie}[d] &
%\mbox{$Q$-groupoids} \ar^{\Lie}[d]
\\
%\mbox{Lie algebra groupoid}\colon
%(\h_{\delta}\rtimes \g) \rightrightarrows \g \ar^{\Lie_H}[r] %\ar^{\Lie_V}[d]
&
%\mbox{Lie algebra algebroid}
\;\;\;\;\;\;  (\h  \rtimes \g)  \rightarrow \g \;\;\;\ar^{[1]_H}[r]
%\ar^{[1]_V}[d]
&
%\mbox{$Q$-algebra}
\;\;\;(\h[1] \rtimes \g, -Q_{\delta}).
%\ar^{[1]}[d]
%\mbox{$Q$-groupoids} \ar^{\Lie}[r]
&
%\mbox{$Q$-algebroids} \ar^{[1]}[r]
&
%\mbox{Double $Q$-manifolds}
%((\h[1] \times \g)[1], Q_{\delta},Q_{br})
}
}
\end{equation}
Recall that the integration of the strict Lie 2-algebra $\h[1]\oplus \g$ is by definition  the strict  Lie 2-group in the upper left corner.

%Here we use the notation $\h_{ab}$ to denote the abelian Lie algebra with underlying vector space $\h$.

%This is also the reason why we use a different terminology for double Lie groupoids, $\mathcal{LA}$-groupoids, etc below.\\

We describe the structures appearing in diagram \eqref{rajdiaspecial}, in particular
the strict  Lie 2-group in the upper left corner.
The strict Lie 2-algebra $\h[1]\oplus \g$ corresponds to the crossed module of Lie algebras $(\g,\h_{\delta},\delta, \alpha)$ (Lemma \ref{bij}), where $\h_{\delta}$ denotes the Lie algebra structure on the vector space $\h$ with bracket $[w_1,w_2]_{\delta}:=[\delta w_1,w_2]$ and $\alpha(v)=[v,\cdot]$ for $v\in \g$. Consider the quadruple\footnote{This quadruple forms what is known as the
\emph{crossed module of Lie groups}  integrating the above crossed module of Lie algebras.}
 $(G,H,\bt,\phi)$. Here $H$ and $G$ are the simply connected Lie groups integrating $\h_{\delta}$ and $\g$, the map  $\bt \colon H \rightarrow G $ is the Lie group morphism integrating $\delta$, and the
  left  action $\phi$ of $G$ on $H$ (by automorphisms of $H$) is obtained integrating  the infinitesimal action (by Lie algebra derivations) $\alpha$.

\begin{description}
\item[$\bullet$ Strict Lie 2-group $(H \rtimes G)\rightrightarrows G$:]
%The Lie-2 group appearing in the upper left corner of the diagram is the one associated to this crossed module. Recall that
%the Lie 2-group $G_\bullet$ associated to a crossed module is the following \cite%{}.
The Lie 2-group\footnote{It is
the Lie 2-group associated to a crossed module of Lie groups   \cite{baez:2gp}.} is as follows.
Its Lie groupoid structure is the action
groupoid of the $H$ action on $G$ via  $h g := \bt(h)\cdot g$ (so  the target of
$(g, h)$ is given by  $\bt(h)\cdot g$ and its source by $g$).  The  group structure on $H \times G$ is the semidirect product structure by the action $\phi$. Explicitly, and using the notation $G_\bullet=(G_1\rightrightarrows G_0)$ for  the Lie 2-group, the group structure 
is given by
 \[m:  G_1 \times G_1 \to G_1, \quad  m( (h_1, g_1), (h_2, g_2) ) = (h_1 \cdot \phi(g_1)(h_2),  g_1g_2)  \]
over the base map $m(g_1, g_2) = g_1 g_2$.

\item[$\bullet$ $\mathcal{LA}$-group $(\h\rtimes G) \rightarrow   G$:] Its Lie algebroid structure is obtained by differentiating the Lie groupoid structure of the strict Lie 2-group $G_\bullet$, hence it is the transformation algebroid of the infinitesimal action of $\h_{\delta}$ on $G$ by $w \mapsto \overset{\rightarrow}{\delta w}$ (the right-invariant vector field  whose value at the identity is $\delta w$).

The group multiplication on $\h\times G$ is  the Lie algebroid morphism  corresponding to $m:  G_1 \times G_1 \to G_1$, i.e., $(w_1,g_1)(w_2,g_2)=(w_1+g_1w_2,g_1g_2)$. In other words, the group structure on $\h\times G$ is the semidirect product by the action of $G$ on the vector space  $\h$ obtained integrating $\alpha$.

\item[$\bullet$ Double Lie algebroid $ (\h  \rtimes \g)  \rightarrow \g $:]
Notice first that applying $Lie_{V}$ to the strict Lie 2-group above one obtains the Lie groupoid in the category of Lie algebras  $(\h_{\delta}\rtimes \g)\rightrightarrows \g$. The Lie groupoid structure is the transformation groupoid of the action of the abelian Lie algebra $\h$ on $\g$ which sends $w\in \h$ to the translation by $\delta w$.

Differentiating this Lie groupoid structure we obtain the
 Lie algebroid structure of our double Lie algebroid,
which therefore\footnote{Here we are using \cite[Thm 2.3]{Macdouble2}, which states that starting from a double Lie groupoid and applying  the functors $Lie_{H}\circ Lie_{V}$  or $Lie_{V}\circ Lie_{H}$, one obtains the same double algebroid \emph{up to} a canonical isomorphism. The canonical isomorphism in our case is the identity on $\h \times \g$. This follows from a simple computation (see the proof of \cite[Thm 2.3]{Macdouble2}) representing each element $(w,v)\in \h \times \g$ as second derivative of the map $\gamma \colon \R^2 \rightarrow H \times G, \gamma(t,u)=(exp_{\h}(tuw),exp_{\g}(tv))$.} is the transformation algebroid of the infinitesimal action of the abelian Lie algebra $\h $ on $\g$ which sends $w$ to the constant vector field $\delta w$.

The Lie algebra structure of our double Lie algebroid is obtained differentiating the Lie group structure of the $\mathcal{LA}$-group, so it  is
%consists of  the Lie algebra $\g$ and
the semidirect product $\h  \rtimes \g$ of the action $\alpha$ of $\g$ on the vector space (abelian Lie algebra)  $\h$. Explicitly: $[(w_1,v_1),(w_2,v_2)]=([v_1, w_2]-
[v_2, w_1]\;,\; [v_1,v_2]).$

\item[$\bullet$ Q-algebra]  $(\h[1] \oplus \g, -Q_{\delta})$\textbf{:}
% (See \cite[Sec. 2.4.3, Sec. 4.2.2]{rajthesis} and  \cite[Sec. 4.3]{rajQalg} for the definitions).
Applying the   functor $[1]_H$ to the double Lie algebroid we obtain the NQ-manifold $(\h[1] \oplus \g, -Q_{\delta})$. (To see this, recall that the anchor of a transformation Lie algebroid is given by the corresponding infinitesimal action, and use eq. \eqref{QA} and eq. \eqref{qd}.)
 It has the graded Lie algebra structure $\h[1] \rtimes \g$.
\end{description}

The latter $Q$-algebra structure is exactly the one corresponding to our original strict Lie-2 algebra by Lemma \ref{DGLAdoubleQ}. Hence  we obtain:
\begin{lemma} The integration of the strict Lie-2 algebra
$\h[1]\oplus \g$ is given by 
the strict Lie 2-group $(H \rtimes G)\rightrightarrows G  $ described above. 
\end{lemma}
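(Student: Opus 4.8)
The plan is to verify that the strict Lie 2-group $(H \rtimes G)\rightrightarrows G$ constructed above satisfies the definition of an integration of $\h[1]\oplus \g$ (given in \S \ref{whatsl2}): namely, that its associated crossed module of Lie groups differentiates to the crossed module of Lie algebras $(\g,\h_{\delta},\delta,\alpha)$ corresponding to $\h[1]\oplus \g$ via Lemma \ref{bij}. Since this is exactly what was arranged by construction in \S \ref{intDGLA}, the proof is essentially a matter of assembling the pieces already laid out in diagram \eqref{rajdiaspecial} and checking consistency. First I would recall that, by Lemma \ref{DGLAdoubleQ}, the strict Lie 2-algebra $\h[1]\oplus \g$ corresponds to the $Q$-algebra $(\h[1]\oplus\g, -Q_\delta)$, and then observe that the bottom-right corner of \eqref{rajdiaspecial} is precisely this $Q$-algebra (this was justified in the last bullet item, via eq. \eqref{QA} and eq. \eqref{qd}).

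The core of the argument is then to chase \eqref{rajdiaspecial} backwards. I would proceed in two steps. Step one: apply $[1]_H$ in reverse to identify the double Lie algebroid $(\h\rtimes\g)\to\g$ whose horizontal $[1]$-shift yields $(\h[1]\oplus\g,-Q_\delta)$; here the point is that the transformation Lie algebroid of the infinitesimal action $w\mapsto\delta w$ of the abelian Lie algebra $\h$ on $\g$, together with the semidirect-product Lie algebra structure $\h\rtimes\g$ coming from $\alpha$, is precisely what produces $-Q_\delta$ after the shift — this uses that the anchor of a transformation algebroid is the defining action, as noted in the excerpt. Step two: apply $\Lie$ in reverse (in both the $\mathcal{LA}$-groupoid and $Q$-algebroid columns) to recover the $\mathcal{LA}$-group $(\h\rtimes G)\to G$, and then $\Lie_H$ in reverse to recover the double Lie groupoid $(H\rtimes G)\rightrightarrows G$. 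At each reversal I would simply cite that the explicitly described structures (transformation groupoid/algebroid, semidirect products) differentiate to the ones immediately below them — these are the standard facts that a transformation Lie groupoid differentiates to the transformation Lie algebroid of the differentiated action, and that a semidirect product of Lie groups differentiates to the semidirect product of the corresponding Lie algebras.

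Finally I would close the loop: the crossed module of Lie groups associated to the strict Lie 2-group $(H\rtimes G)\rightrightarrows G$ is the quadruple $(G,H,\bt,\phi)$ described just before the bullet list, and by construction $H$, $G$, $\bt$, $\phi$ were chosen as the simply connected integrations of $\h_\delta$, $\g$, $\delta$, $\alpha$ respectively. Hence this crossed module of Lie groups differentiates back to $(\g,\h_\delta,\delta,\alpha)$, which is exactly the crossed module corresponding to $\h[1]\oplus\g$. By the definition of integration in \S \ref{whatsl2}, this establishes the claim.

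I expect the main subtlety — rather than a genuine obstacle — to lie in Step two of the chase, specifically in ensuring that applying the two functors $\Lie_V$ and $\Lie_H$ (or $\Lie$ in the two columns) to $(H\rtimes G)\rightrightarrows G$ in the two possible orders yields the \emph{same} double Lie algebroid, so that the diagram genuinely commutes and the backward chase is unambiguous. This is handled by the compatibility result already invoked in the excerpt's footnote, namely \cite[Thm 2.3]{Macdouble2}, which guarantees agreement up to a canonical isomorphism, and a short computation (representing $(w,v)\in\h\times\g$ as a second derivative of $\gamma(t,u)=(\exp_\h(tuw),\exp_\g(tv))$) shows this canonical isomorphism is the identity in our case. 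With that in hand, everything in \eqref{rajdiaspecial} is consistent and the lemma follows.
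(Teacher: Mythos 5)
Your proposal is correct and follows essentially the same route as the paper: the paper's own ``proof'' consists precisely of the description of the structures in diagram \eqref{rajdiaspecial}, the forward verification that applying $\Lie_H$, $\Lie$ and $[1]_H$ lands on the $Q$-algebra $(\h[1]\rtimes\g,-Q_\delta)$ corresponding to $\h[1]\oplus\g$ via Lemma \ref{DGLAdoubleQ}, and the observation (your closing paragraph) that the crossed module $(G,H,\bt,\phi)$ of the 2-group differentiates by construction to $(\g,\h_\delta,\delta,\alpha)$, which is what the definition of integration in \S\ref{whatsl2} demands. Your attention to the order-of-differentiation issue, resolved by \cite[Thm 2.3]{Macdouble2}, matches the paper's footnote.
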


  \begin{remark}\label{rk:courant}
As mentioned above,  a strict Lie 2-algebra can be viewed as a
 Q-algebra, or equivalently as  a double Lie algebroid, and the
integration we  perform in \S \ref{intDGLA} is its integration to a
strict Lie 2-group.

Another instance of Lie 2-algebroid that fits into the framework of
double Lie algebroids is given by a Courant algebroid $A\oplus A^*$
arising from a Lie bialgebroid $(A, A^*)$. In this case, however,
the integration of the corresponding double Lie algebroid via
diagram \eqref{rajdia} does not coincide with  the integration of
the Courant algebroid $A\oplus A^*$. The relation between the two
integrations is not yet clear to us.
\end{remark}

\subsection{Integrating strict actions: $\mathcal{LA}$-group actions on Lie algebroids and 2-group actions on Lie groupoids}\label{sec:act}

This subsection is the heart of the paper: we define the notion of global action integrating  a strict   action $\mu$ as in Def. \ref{strictaction}, 
and show that
%(Def. \ref{strictaction})
%if $\mu$ satisfies some freeness assumptions
the global action exists.
%The key step is to view the action $\mu$ in the framework of Section \ref{Mac}.
The idea is  
\begin{itemize}
\item \S \ref{whatintl2}: show that  
the transformation algebroid of the action $\mu$ fits in the framework of diagram  \eqref{rajdia} 
\item \S \ref{isa}: chase back in the diagram to obtain certain transformation groupoids, and   describe the corresponding actions.
\end{itemize}
The diagram of Mackenzie's doubles \eqref{rajdia}, in the setup at hand, is displayed in  \eqref{trafogroids} (it extends our previous diagram \eqref{rajdiaspecial}). 
 The various actions involved are displayed just before Thm. \ref{check}.

We end this subsection with remarks on the integrated actions (\S \ref{secrem}).

As earlier, we let $\h[1] \oplus \g $ be a strict Lie-2 algebra and $A\to M$ a Lie algebroid (so $A[1]$ is a NQ-1 manifold).

\subsubsection{What is an integration of a strict  Lie-2 algebra action?}\label{whatintl2}

The following proposition, which is an analogue to Lemma \ref{DGLAdoubleQ}, associates a Q-algebroid to the strict action $\mu$.

\begin{prop}\label{doublela} Let $\h[1] \oplus \g$ be a strict Lie-2 algebra, $A$ be a Lie algebroid. We consider a \emph{morphism of graded Lie algebras}
$\mu \colon \h[1] \oplus \g \rightarrow {\chi}(A[1])$. Then $\mu$ is a morphism of DGLAs (i.e. it respects differentials) if{f} the transformation algebroid of the action $\mu$
%\begin{align}
\begin{equation}
\label{dla}
\xymatrix{
 T_{\mu}:=(\h[1] \oplus \g)[1] \ar[d]\times A[1] \\
A[1],}
\end{equation}
%\end{align}
 endowed with the homological vector field $-Q_{\delta}+Q_A$, is a $Q$-algebroid.
 Recall that  $Q_{\delta}$ was defined in Remark \ref{linqaud}, and $Q_A$ encodes the Lie algebroid structure on $A$ as in Lemma \ref{nq1la}.
\end{prop}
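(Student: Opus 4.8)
The plan is to unpack the definition of a $Q$-algebroid (Def.\ \ref{Q-algoid}) for the transformation algebroid $T_\mu \to A[1]$ and check that each piece is equivalent to one of the structural conditions \eqref{c}--\eqref{b} characterizing a DGLA morphism. First I would recall, following the discussion after Def.\ \ref{Q-algoid}, that a morphism of graded Lie algebras $\mu \colon \h[1]\oplus\g \to \chi(A[1])$ produces an action (transformation) {\bf N1}-algebroid $T_\mu = (\h[1]\oplus\g)[1]\times A[1] \to A[1]$: its anchor is $\Gamma(T_\mu)\to\chi(A[1])$ induced by $\mu$, and its bracket on constant sections is the graded Lie bracket of $\h[1]\oplus\g$, extended by the Leibniz rule. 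This uses only the graded-Lie-algebra structure, not the differentials, which is why the graded-Lie-algebra morphism hypothesis suffices to even form the object in \eqref{dla}. So the content of the proposition is entirely about when the extra homological vector field $-Q_\delta + Q_A$ is \emph{morphic} in Mehta's sense.

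Next I would analyze $-Q_\delta + Q_A$ as a vector field on $T_\mu[1]$. There are two conditions to verify: (i) it squares to zero (it is homological); (ii) it is morphic, i.e.\ linear on the fibers of $T_\mu \to A[1]$ and commuting with the homological vector field $Q_{T_\mu}$ encoding the {\bf N1}-algebroid structure (per \cite[Thm.\ 4.6]{rajQalg}). For (i): $Q_A$ is homological by Lemma \ref{nq1la}; $Q_\delta$ is linear and homological on $(\h[1]\oplus\g)[1]$ by the argument in the proof of Lemma \ref{DGLAdoubleQ}; and since $Q_A$ lives on the $A[1]$ factor while $-Q_\delta$ lives on the $(\h[1]\oplus\g)[1]$ factor of the product, they commute, so $[-Q_\delta+Q_A,-Q_\delta+Q_A]=0$ automatically — this part needs no hypothesis on $\mu$. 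For (ii), linearity on fibers is clear since $Q_\delta$ is linear and $Q_A$ is pulled back from the base; the real point is the commutation $[-Q_\delta+Q_A,\,Q_{T_\mu}]=0$.

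The main obstacle — and the heart of the proof — is to show that this last commutation is precisely equivalent to $\mu$ respecting the differentials, i.e.\ to equations \eqref{c} and \eqref{d}. I would do this in coordinates, mirroring Remark \ref{linqaud} and eq.\ \eqref{QA}: write $Q_{T_\mu}$ as the homological vector field of the transformation algebroid (anchor terms given by $\mu$ evaluated on a basis of $\h[1]\oplus\g$, structure-constant terms given by the bracket of $\h[1]\oplus\g$), write $-Q_\delta$ in terms of the matrix $Q^k_i$ of $\delta$ as in \eqref{qd}, and write $Q_A$ as in \eqref{QA}. Computing the graded commutator, the terms quadratic in the new fiber coordinates reproduce the Jacobi/bracket identities already guaranteed by $\mu$ being a graded Lie algebra morphism; the surviving terms, linear in the new fiber coordinates, say exactly that for each basis element $w$ of $\h[1]$ one has $\mu(\delta w) = [Q_A,\mu(w)] = d_Q(\mu(w))$ and for each basis element $v$ of $\g$ one has $[Q_A,\mu(v)]=0$ — i.e.\ \eqref{c} and \eqref{d}. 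Conversely, assuming \eqref{c} and \eqref{d}, these terms vanish and $-Q_\delta+Q_A$ is morphic, so $T_\mu$ is a $Q$-algebroid. I expect the bookkeeping of signs and of which coordinate block each derivation acts on to be the only delicate point; conceptually it is the same mechanism as Lemma \ref{DGLAdoubleQ}, now with the action of $\mu$ threaded through via the anchor of the transformation algebroid.
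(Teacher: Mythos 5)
Your proposal is correct and follows essentially the same route as the paper's proof: split the homological vector field encoding the transformation algebroid structure into a bracket part and an action part ($Q_{br}+Q_{action}$ in the paper's notation), observe that homologicity and fiber-linearity of $-Q_{\delta}+Q_A$ are automatic, and reduce the morphic condition $[-Q_{\delta}+Q_A,\,Q_{br}+Q_{action}]=0$ by a coordinate computation to $[Q_A,\mu(w)]=\mu(\delta w)$ and $[Q_A,\mu(v)]=0$, i.e.\ to $\mu$ respecting differentials. The only slight imprecision is attributing the vanishing of the quadratic terms to $\mu$ being a graded Lie algebra morphism, whereas the relevant term $[-Q_{\delta},Q_{br}]=0$ comes from the DGLA axioms of $\h[1]\oplus\g$ itself via Lemma \ref{DGLAdoubleQ} (which you do also invoke), so this does not affect the argument.
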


\begin{proof} Denote $L:=\h[1] \oplus  \g$. The vector field $-Q_{\delta}+Q_A$ is
homological  and it is linear in the fibers. So, in order to show that it is morphic,  we just have to show that, once we view it as a vector field on $L[1]\times A[1]$, it commutes with the homological vector field encoding the Lie algebroid structure of \eqref{dla}. The latter is $Q_{br}+Q_{action}$, where $Q_{br}$ was defined in Remark \ref{linqaud} and $Q_{action}$ is defined in  eq. \eqref{qaction} below
and  encodes the action $\mu$.

To show \begin{equation}\label{comm}
[-Q_{\delta}+Q_A,Q_{br}+Q_{action}]=0.
\end{equation}
%we write explicitly the vector fields on $L[1]\times A[1]$.
we proceed as follows.

Take bases $\{v_i\}$ of $\g$ and $\{w_{\alpha}\}$ of $\h[1]$ , whose dual bases induce coordinates $\eta^i$ on $\g[1]$ and
 $P^{\alpha}$ of $\h[2]$ (of degrees $1$ and $2$ respectively).

 %The DGLA structure on $L$ is encoded by $[v_i,v_j]=A^k_{ij}v_k$, $[v_i,w_{\alpha}]=B^{\beta}_{i \alpha} w_{\beta}$ and
If the differential on $L$ is given by  $\delta w_{\alpha}=D_{\alpha i} v_i$, then in coordinates
 %the correspoding homological vector field on $L[1]$ is
%$Q_L:= Q_{br}+Q_{\delta}$ where
$$
%\left[-\frac{1}{2} A^k_{ij} \eta^i\eta^j \pd{\eta^k} +B^{\beta}_{i \alpha} \eta^iP^{\alpha} \pd{P^{\beta}}\right]+
Q_{\delta}= -P^{\alpha} D_{\alpha k} \pd{\eta^{k}}.$$

Further
\begin{align}\label{qaction}
Q_{action}:=\eta^i\mu(v_i)-P^{\alpha}\mu(w_{\alpha})
\end{align}
 encodes the $L$-action on $A[1]$
%Clearly $(Q_L+Q_A)^2=0$, since $Q_L$ and $Q_A$ are homological and are defined on different manifolds.
%$$(Q_{action})^2=\eta^i\eta^j[\mu(v_i),\mu(v_j)]+s\eta^iP^{\alpha}[\mu(v_i),\mu(w_{\alpha})].$$

%Now we check that \eqref{comm} holds.
We have $[Q_A,Q_{br}]=0$, since the two vector fields are defined on different manifolds,
 and $[-Q_{\delta},Q_{br}]=0$ as shown in Lemma \ref{DGLAdoubleQ}. Since
$$
[Q_A,Q_{action}]=-\eta^i[Q_A,\mu(v_i)]-P^{\alpha}[Q_A,\mu(w_{\alpha})]
$$ and
$$[-Q_{\delta},Q_{action}]=P^{\alpha}D_{\alpha k}\mu(v_k)=P^{\alpha}\mu(\delta w_{\alpha})
$$
we conclude that \eqref{comm} holds if{f} $[Q_A,\mu(w_{\alpha})]=\mu(\delta w_{\alpha})$ for all $w_{\alpha}$ and $[Q_A,\mu(v_i)]=0$ for all $v_i$, which means that $\mu$ respects differentials.
\end{proof}

\begin{remark} \label{rk:non-strict}
1) Prop. \ref{doublela} together with diagram \eqref{rajdiaWHOLE} imply that
$$\big( (\h[1]\times \g)[1]\times A[1]\;,\; Q_\delta + Q_A\;,\; Q_{br}+ Q_{action} \big)$$ is   a double $Q$-manifold \cite{vorover}.
  When the action $\mu$ is not necessarily strict (in the sense of \cite{ZZL2}), %but rather as in Def. \ref{nq1action}, 
  the above is no longer a double $Q$-manifold. However in that case one can show \cite{RajMarco} that the sum of the four above vector fields  is still a homological vector field. This makes \eqref{dla} into an ``action'' Lie 2-algebroid.   Thus  the integration of the action  should be encoded by the  Lie 2-groupoid integrating this action Lie 2-algebroid.

  In the strict case, we will see below that the integration of the action is given by a double Lie groupoid ($T_{\Phi}$ in diagram \eqref{trafogroids}). Moreover in this case, unlike the case of Courant algebroids (see Remark \ref{rk:courant}),  there is a Lie 2-groupoid, obtained applying the Artin-Mazur's codiagonal construction \cite{artin-mazur} (see Remark \ref{rk:artin-mazur}) to $T_{\Phi}$, which is also to be considered an integration of the action. These two integrating objects are not exactly equivalent: the double Lie groupoid   contains more information because the double Lie algebroid contains more information than the Lie 2-algebroid.

2)  \cite[Thm. 6.2]{rajQalg}, which is proved without the explicit use of coordinates, is a special case of Prop. \ref{doublela} (namely, the special case we consider in Ex. \ref{gA}).
\end{remark}

Now assume that
\begin{center}
\fbox{\begin{Beqnarray*}
 \mu \colon \h[1] \oplus \g &\rightarrow& {\chi}(A[1])
\end{Beqnarray*}}
\end{center}
is a strict action (Def. \ref{strictaction}), i.e. a morphism of DGLAs.
Prop. \ref{doublela} allows to view the action $\mu$ in the framework of diagram
\eqref{rajdia}. %The infinitesimal version of this %statement is a correspondance between transformation \emph{algebroids} $C$
%and some notion of ``infinitesimal actions in $C$''. The following lemma states what ``infinitesimal actions in $C$'' means for $C$ the category of Lie algebroids.

We make the following definitions of integration of $\mu$.
\begin{defi}\label{defin}
The \emph{integration} of the strict action $\mu$ above is either of the following actions:
\begin{itemize}
\item[I)] An $\mathcal{LA}$-group action $\Psi$ of
$(\h\rtimes G) \rightarrow   G$ on $A\rightarrow   M$
 such that applying the functor $\Lie_V$ to its transformation groupoid one obtains the Q-algebroid  \eqref{dla}.
  \item[II)] A strict Lie 2-group action $\Phi$ of  $(H \rtimes G)\rightrightarrows G$ on $\Gamma\rightrightarrows M$ such that applying the functors $\text{Lie}_H$ and  $\Lie_V$ to its transformation groupoid one obtains the Q-algebroid  \eqref{dla}.
   (Here $\Gamma\rightrightarrows M$ denotes the source simply connected Lie groupoid integrating $A$.)
\end{itemize}
\end{defi}

Def. \ref{defin} requires some explanation. For a small category $\cC$ with direct
products, a group object $G$ in $\cC$, and an object $N$ in $\cC$, the
transformation groupoid is $G\times N\rightrightarrows N$  as for usual Lie
group action,  and it is a groupoid object in $\cC$ \cite[Prop. 3.0.15]{lucathesis}.
The transformation groupoid appearing in  I) is the one corresponding to  the group action  $\Psi$.
As $\Psi$ is a group action in the category $\textbf{LA}$,
the transformation groupoid is a groupoid in $\textbf{LA}$, (an $\mathcal{LA}$-groupoid), and hence fits in diagram \eqref{rajdia}. The same reasoning, applied to the category {\bf StrLgd}, holds for II).

\emptycomment{
 \begin{prop} \label{luca}
Let $\cC$ be a small category with direct products, $G$ a group\footnote{In \cite[Prop. 3.0.15]{lucathesis} $G$ is allowed to be a \emph{groupoid} object and $\cC$ is additionally required to have fiber products.} object in $\cC$ and $N$ an object in $\cC$. Then a group action $G \times N \rightarrow N$ is a morphism in $\cC$ if{f} the corresponding transformation groupoid $G \times N \rightrightarrows N$ is a groupoid object in $\cC$.
\end{prop}
}

 The situation is summarized in   diagram \eqref{trafogroids} in \S \ref{isa},
which is
the diagram of Mackenzie's doubles \eqref{rajdia} for the above transformation groupoids. \\

Applying the vertical integration functor to a double Lie algebroid whose  vertical structures are  \emph{transformation} algebroids (such as  the one associated to \eqref{dla} above), one
expects to obtain an $\mathcal{LA}$-groupoid whose vertical structures are \emph{transformation} groupoids.

Unfortunately there are no  general enough statements about the integration of double algebroids to $\mathcal{LA}$-groupoids or $\mathcal{LA}$-groupoid to double groupoids (see Remark \ref{invert}), so we can not ``chase back'' in diagram \eqref{rajdia}, but rather in \S \ref{isa} we  have to check explicitly that the expected fact mentioned above is true for the double algebroid \eqref{dla}.

Moreover our method has the advantage that it provides explicit formulae.

\subsubsection{Integrating the strict action}\label{isa}

Now we integrate the strict Lie 2-algebra action $\mu$ on a Lie algebroid $A$ to both a $\mathcal{LA}$-group action on $A$ and a Lie 2-group action on the source-simply connected Lie groupoid of $A$.

By virtue of the identification of Remark \ref{identifyvf}, $\mu$ induces a
Lie algebra morphism
\begin{center}
\fbox{\begin{Beqnarray*}
 \tilde{\mu} \colon \h \rtimes \g \rightarrow \chi(A).
\end{Beqnarray*}}
\end{center}
%\begin{equation}\label{Liealgact}
%\h_{ab} \rtimes \g \rightarrow \chi(A),
%\end{equation}
Here
%\footnote{We write $\h_{ab}$ instead of $\h$ to emphatize the fact that this is an abelian Lie algebra.}
 $\h \rtimes  \g$ is the semidirect product of $\g$ and the abelian Lie algebra $\h$, and agrees
with the Lie algebra structure
 % on the vector space $\h \times \g$
induced by the original graded Lie algebra structure on $\h[1] \oplus \g $.
In other words, we have an infinitesimal action of the Lie algebra $\h \rtimes \g $ on $A$.
Using this identification,  $w\in \h$  maps to $\mu(w)$ seen as a constant vertical vector field on $A$ and  $v \in \g$  maps to the infinitesimal vector bundle automorphism given by $\mu(v)$.\\

Assume that the infinitesimal action $ \tilde{\mu}|_{\g} \colon \g \rightarrow \chi(A)$ is \emph{complete} in the sense that the image of $\tilde{\mu}$ consists of complete vector fields.

Then the infinitesimal action
$\tilde{\mu}|_{\g}$ can be integrated to a  Lie group action 
 $$\psi \colon G \times A \to A$$  of $G$ on $A$, where $G$ denotes the simply connected Lie group integrating $\g$.
 Notice that $\psi$ acts by Lie algebroid automorphisms of $A$, as a consequence of Lemma \ref{classical}.
  \begin{prop}\label{prop:LAgroupaction} Consider the Lie algebroids $A\to M$ and
$\h \rtimes  G \to G$ (as in \S \ref{intDGLA}).
 The Lie group action on $A$ obtained integrating the infinitesimal action $\tilde{\mu}$ is
\begin{center}
\fbox{\begin{Beqnarray*}
 \Psi  \colon (\h \rtimes  G) \times A &\rightarrow& A\\
(w,g),a_x &\mapsto& \psi(g,a_x)+\mu(w)|_{gx}
\end{Beqnarray*}}
\end{center}
where $x\in M$ and $a_x \in A_x$.

Further $\Psi$ is a Lie algebroid morphism. In other words, $\Psi$ is an $\mathcal{LA}$-group action.
\end{prop}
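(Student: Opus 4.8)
The plan is to verify two things: first, that the given formula $\Psi((w,g),a_x) = \psi(g,a_x) + \mu(w)|_{gx}$ really is the integration of the infinitesimal action $\tilde\mu$ of $\h\rtimes\g$; second, that $\Psi$ is a morphism of Lie algebroids over the group morphism $\h\rtimes G\to G$ (induced by $\bt$ at the base, since the base Lie algebroid of $\h\rtimes G\to G$ is $G$ itself and the action on $M$ is through $\psi$ composed with $\bt$... actually the $\h$-part acts trivially on $M$, so the base map is $(w,g,x)\mapsto gx$). I would organize this as follows.

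First I would check the group-action axioms for $\Psi$ directly. The identity $\Psi((0,e),a_x)=a_x$ is immediate. For associativity one must compute $\Psi\big((w_1,g_1),\Psi((w_2,g_2),a_x)\big)$ and compare it with $\Psi\big((w_1,g_1)(w_2,g_2),a_x\big)$, where the product in $\h\rtimes G$ is the semidirect product $(w_1,g_1)(w_2,g_2)=(w_1+g_1\cdot w_2, g_1g_2)$ described in \S\ref{intDGLA}. The left side is $\psi(g_1,\psi(g_2,a_x)+\mu(w_2)|_{g_2x})+\mu(w_1)|_{g_1g_2x}$; since $\psi(g_1,-)$ is a vector bundle automorphism it is fiberwise linear, so this equals $\psi(g_1g_2,a_x)+\psi(g_1)_*(\mu(w_2)|_{g_2x})+\mu(w_1)|_{g_1g_2x}$. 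The key point needed is $\psi(g_1)_*(\mu(w_2)|_{g_2x}) = \mu(g_1\cdot w_2)|_{g_1g_2x}$, i.e. that pushing the constant vertical vector field $\mu(w_2)$ forward by $\psi(g_1)$ gives $\mu$ of the $G$-translate of $w_2$. This is exactly the integrated form of the equivariance in the second bullet of Lemma \ref{classical} ($\mu|_\h$ is equivariant for the $\g$-action on $\h$ and $\mu|_\g$ on $\Gamma(A)$), and follows because both sides are $G$-actions on $\Gamma(A)$ with the same infinitesimal generator and $G$ is connected. With this, the right side of associativity also equals $\psi(g_1g_2,a_x)+\mu(g_1\cdot w_2)|_{g_1g_2x}+\mu(w_1)|_{g_1g_2x}$, matching.

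Next I would verify that $\Psi$ integrates $\tilde\mu$. It suffices to differentiate $\Psi$ at the identity of $\h\rtimes G$ in the direction $(w,v)$ and recover $\tilde\mu(w,v)=\mu(v)+\text{(constant vertical field }\mu(w))$. Taking a curve $(tw+o(t), \exp_\g(tv))$, the $g$-part contributes $\frac{d}{dt}\big|_0\psi(\exp_\g(tv),a_x)=\tilde\mu(v)(a_x)$ by construction of $\psi$, and the $w$-part contributes $\frac{d}{dt}\big|_0\big(a_x+t\,\mu(w)|_x\big)=\mu(w)|_x$, the constant vertical field; summing gives $\tilde\mu(w,v)$. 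By uniqueness of integration of complete infinitesimal actions of a simply connected group, $\Psi$ is \emph{the} integrating action. (Completeness of $\tilde\mu$ is the standing assumption; the $\h$-directions are automatically complete since they are constant vertical, and completeness of the $\g$-directions was assumed.)

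Finally, the main obstacle — the $\mathcal{LA}$-group statement, i.e. that $\Psi\colon (\h\rtimes G)\times A\to A$ is a Lie algebroid morphism. For this I would use the principle from Prop.~\ref{doublela} / Def.~\ref{defin}: it is enough to show that applying $\Lie_V$ to the transformation groupoid of $\Psi$ yields precisely the $Q$-algebroid \eqref{dla} with homological vector field $-Q_\delta+Q_A$ on $T_\mu$. Since $\Psi$ is a group action of the connected group $\h\rtimes G$, checking that it is a Lie algebroid morphism can be reduced to checking it infinitesimally: differentiate the action to get the transformation double object and compare its Lie algebroid structure with the one encoded by $Q_{br}+Q_{action}$ (which is morphic for $-Q_\delta+Q_A$ by Prop.~\ref{doublela}). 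Concretely, one must see that the Lie algebroid $A\Psi \to A$ of the transformation groupoid $(\h\rtimes G)\times A\rightrightarrows A$ is the action algebroid of $\tilde\mu$, and that its anchor and bracket match $Q_{action}$ of \eqref{qaction} and $Q_{br}$ — this is where a short coordinate computation (using the frame/coordinates of Remark~\ref{coordsNQ1} and the expressions \eqref{QA}, \eqref{qd}) is needed, but it is the same computation that underlies Prop.~\ref{doublela}, now in integrated form. The genuinely delicate point is that being a morphism in $\mathbf{LA}$ must hold on the \emph{nose} (strict associativity diagram in $\mathbf{LA}$), which is guaranteed because we have realized $\Psi$ as the integration of an honest $\h\rtimes\g$-action by Lie algebroid infinitesimal automorphisms, and by Lemma~\ref{classical} the $\g$-part already integrates to automorphisms of $A$ while the $\h$-part acts by Lie-algebroid-compatible fiberwise translations; I would spell out that $\Psi((w,g),-)\colon A\to A$ covers $\bt$-and-$\psi$ on $M$ and is a bundle map preserving the anchor and bracket, deducing it from the corresponding infinitesimal statements in Lemma~\ref{classical} and connectedness of $\h\rtimes G$.
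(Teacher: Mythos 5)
The first two parts of your plan (the group-action axioms via the integrated equivariance $\psi(g,\mu(w)_x)=\mu(gw)_{gx}$, and the verification that differentiating $\Psi$ recovers $\tilde\mu$) match the paper's argument and are fine. The gap is in the third part, which is the real content of the proposition: you assert that the Lie algebroid morphism property of $\Psi$ ``can be reduced to checking it infinitesimally'' by connectedness of $\h\rtimes G$, invoking Prop.~\ref{doublela} and the transformation-groupoid picture. No such reduction is available, and the paper explicitly flags this: there are no general statements allowing one to pass from the $Q$-algebroid/double-Lie-algebroid level back to the $\mathcal{LA}$-groupoid level (Remark~\ref{invert}), which is precisely why an explicit check is required. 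Your appeal is also circular in the logic of the paper: Thm.~\ref{check} uses Prop.~\ref{prop:LAgroupaction} to conclude that $T_\Psi$ is an $\mathcal{LA}$-groupoid, not the reverse. Note also that for fixed $(w,g)$ with $w\neq 0$ the map $\Psi((w,g),\cdot)\colon A\to A$ is only affine on fibers, so the morphism property cannot be deduced pointwise from Lemma~\ref{classical} plus connectedness; it is a statement about the total map from the product Lie algebroid $(\h\rtimes G)\times A$ over the non-injective base map $(g,x)\mapsto gx$, and bracket compatibility for such non-base-preserving morphisms requires a genuine argument.

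What is missing is the mechanism the paper uses for that check. Anchor compatibility is a short direct computation using $(\delta w)_M=\rho_A(\mu(w))$ and the $G$-equivariance of $\rho_A$. For the brackets one introduces the automorphism $\varphi$ of $\pi_M^*A$ given by $a_{(g,x)}\mapsto g^{-1}\cdot a_{gx}$, observes that the sections $\varphi(a)$ (for $a\in\Gamma(A)$) and the constant sections $w\in\h$ are $\Psi$-projectable and span $E=(\h\rtimes G)\times A$, so that by \cite[Prop.~4.3.8]{MK2} it suffices to verify bracket compatibility on these; the cases $[\varphi(a_1),\varphi(a_2)]$ and $[w_1,w_2]$ are immediate, and the cross term requires proving $[w,\varphi(a)]_E=\varphi([\mu(w),a]_A)$, which is done in a local frame via a multiplicativity identity for the transition functions $f_i^j(g,x)$. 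None of this is present in, or implied by, your sketch, so as written the proposal does not establish the $\mathcal{LA}$-group statement.
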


The proof of Prop. \ref{prop:LAgroupaction} is given in Appendix \ref{App:longproof}.

Integrating the Lie algebroid $\h \rtimes  G \rightarrow G$ we obtain the strict Lie 2-group $(H \rtimes G)\rightrightarrows G$ described in  \S \ref{intDGLA}, where $H$ is simply connected. Assume that the Lie algebroid $A\to M$ is integrable to a source simply connected Lie groupoid  $\Gamma$. 
\begin{thm}\label{thm:2groupaction}  Consider the Lie groupoids $\Gamma \rightrightarrows M$ and the Lie 2-group $(H \rtimes G)\rightrightarrows G$. %described in Section \ref{sec:lag}.

The Lie groupoid morphism
\begin{center}
\fbox{\begin{Beqnarray*}
 \Phi  \colon (H \rtimes  G) \times \Gamma &\rightarrow& \Gamma
\end{Beqnarray*}}
\end{center}

integrating the Lie algebroid morphism $\Psi$ is a Lie 2-group action.
\end{thm}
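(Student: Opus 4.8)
The plan is to integrate the $\mathcal{LA}$-group action $\Psi$ of Prop.~\ref{prop:LAgroupaction} by applying the Lie functor ``in reverse'' along the horizontal edge of diagram~\eqref{rajdia}, and then to verify the group-action axioms at the level of Lie groupoids. First I would note that $\Psi$ being an $\mathcal{LA}$-group action means its transformation groupoid $(\h\rtimes G)\times A \rightrightarrows A$ is a groupoid object in $\textbf{LA}$, i.e.\ an $\mathcal{LA}$-groupoid, with vertical sides the transformation algebroids. Since the vertical algebroids $\h\rtimes G \to G$ and $A\to M$ are assumed integrable (to $H\rtimes G$ and $\Gamma$ respectively, both source-simply-connected), one can apply the Lie functor ``backwards'' to each structure map of this $\mathcal{LA}$-groupoid. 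The key point is that the source-simply-connected integration is \emph{functorial}: a morphism of integrable Lie algebroids between the associated source-simply-connected groupoids integrates uniquely (Lie's second theorem for algebroids, \cite{cf}), and this assignment respects composition and products. Hence each of the structure morphisms of the $\mathcal{LA}$-groupoid (multiplication, unit, inverse, source, target of the groupoid-in-$\textbf{LA}$) integrates to a Lie groupoid morphism, and the groupoid axioms — being commutative diagrams of Lie algebroid morphisms — integrate to the corresponding commutative diagrams of Lie groupoid morphisms by uniqueness of integration. This produces a double Lie groupoid $T_\Phi$ whose horizontal edge over the point is $(H\rtimes G)\rightrightarrows G$ and whose other edge is $\Gamma\rightrightarrows M$.

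Next I would extract the action from $T_\Phi$. A double Lie groupoid whose ``top-left'' corner is $(H\rtimes G)\times\Gamma$, whose side is the group object $(H\rtimes G)\rightrightarrows G$ and whose other side is $\Gamma\rightrightarrows M$, with the top-left corner being the transformation-groupoid-shaped object, is exactly the transformation groupoid of a strict Lie $2$-group action $\Phi\colon (H\rtimes G)\times\Gamma \to \Gamma$ (using \cite[Prop.~3.0.15]{lucathesis} in the category $\textbf{StrLgd}$, which identifies group actions with transformation groupoids). Concretely, $\Phi$ is recovered as one of the structure maps of $T_\Phi$, namely the one integrating $\Psi$ along the groupoid ($\Gamma$-)direction; and the group-action axioms for $\Phi$ (associativity $\Phi\circ(m\times\id)=\Phi\circ(\id\times\Phi)$ and unitality) follow from the corresponding axioms for $\Psi$ by the functoriality/uniqueness argument of the previous paragraph, since $\Psi$ is a Lie algebroid morphism satisfying these axioms and both sides of each axiom integrate the same algebroid morphism.

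The last thing to check is that $\Phi$ is by \emph{Lie groupoid} morphisms in the $\Gamma$-variable, i.e.\ for each $(h,g)\in H\rtimes G$ the map $\Phi((h,g),-)\colon\Gamma\to\Gamma$ is a groupoid automorphism. This is automatic: it is the integration of the Lie algebroid automorphism $\Psi((w,g),-)$ of $A$ (which is a Lie algebroid morphism by Prop.~\ref{prop:LAgroupaction}), and the source-simply-connected integration of a Lie algebroid automorphism is a Lie groupoid automorphism, again by functoriality. Compatibility of $\Phi$ with the group structure on $H\rtimes G$ is the associativity axiom just discussed.

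The main obstacle is the first step: there is no general theorem guaranteeing that an $\mathcal{LA}$-groupoid with integrable Lie algebroid structures arises from a double Lie groupoid, nor that a double Lie algebroid integrates to an $\mathcal{LA}$-groupoid (Remark~\ref{invert}). What saves us here is the \emph{transformation} structure: the $\mathcal{LA}$-groupoid at hand has all its Lie algebroid sides equal to transformation algebroids of honest Lie \emph{group} / Lie \emph{algebra} actions, so integrating each side reduces to Lie's second theorem for groups and the functoriality of source-simply-connected groupoid integration, rather than to the general (open) double-structure integration problem. Thus the real work is to make this reduction precise — to verify that the four structure maps integrate compatibly and that the resulting object is genuinely a double Lie groupoid of transformation type — which is exactly the explicit check announced in \S\ref{isa}; once that is in place, Theorem~\ref{thm:2groupaction} follows formally.
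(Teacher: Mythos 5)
Your core argument is the same as the paper's: $\Phi$ exists and is a Lie groupoid morphism because its domain $(H\rtimes G)\times\Gamma$ is source simply connected (Lie's second theorem), and the group--action axioms for $\Phi$ are obtained by integrating the corresponding identities for $\Psi$, e.g.\ $\Psi\circ(\tilde{m}\times \id_A)=\Psi\circ(\id_{\h\rtimes G}\times\Psi)$ integrates to $\Phi\circ(m\times \id_\Gamma)=\Phi\circ(\id_{H\rtimes G}\times\Phi)$ by uniqueness of integration of algebroid morphisms out of a source simply connected groupoid. The detour through the full double Lie groupoid $T_\Phi$ (integrating all structure maps of the $\mathcal{LA}$-groupoid $T_\Psi$ and invoking \cite[Prop.~3.0.15]{lucathesis}) is not needed for this theorem; the paper performs that verification separately, in Thm.~\ref{check}, and your closing remarks about why the general (open) integration problem for double structures is circumvented here are consistent with what the paper says.

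There is, however, one incorrect claim in your third paragraph: it is \emph{not} true that $\Phi((h,g),-)\colon\Gamma\to\Gamma$ is a groupoid automorphism for every $(h,g)\in H\rtimes G$, nor is $\Psi((w,g),-)\colon A\to A$ a Lie algebroid automorphism for $w\neq 0$. Indeed $\Psi((w,g),-)\colon a_x\mapsto \psi(g,a_x)+\mu(w)|_{gx}$ is an \emph{affine} shift in the fibers, not even a vector bundle morphism, and correspondingly $\Phi((h,e),-)$ is left translation by a (generally non-identity) bisection of $\Gamma$ (cf.\ Prop.~\ref{explicitPhi}~b)), which does not preserve the groupoid composition. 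Only the units $(e,g)$ act by groupoid automorphisms (Prop.~\ref{explicitPhi}~a)). Fortunately the condition you were trying to verify is not part of the definition of a group action in $\mathbf{StrLgd}$: what is required is that $\Phi$ be a single Lie groupoid morphism from the \emph{product} groupoid $(H\rtimes G)\times\Gamma\rightrightarrows G\times M$ to $\Gamma\rightrightarrows M$ satisfying the action axioms, and this is already secured by your first two paragraphs. So the proof stands once that paragraph is deleted.
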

In Prop. \ref{explicitPhi} we will give a description of the action $\Phi$.

\begin{proof} Notice that $\Phi$ is a well-defined Lie groupoid morphism as its domain is source simply connected.

We show that $\Phi$ is a group action.
Recall from \S \ref{intDGLA} that the group multiplication  $m \colon (H \rtimes  G) \times (H \rtimes  G) \rightarrow (H \rtimes  G)$ is the Lie groupoid morphism which integrates the multiplication $\tilde{m}$ on $\h \rtimes  G$. Hence integrating both sides of the equality of Lie algebroid morphisms $$\Psi \circ (\tilde{m}\times Id_A) = \Psi \circ (Id_{\h \rtimes  G} \times \Psi),$$
which holds because $\Psi$ is a group action,
one obtains $$\Phi \circ ({m}\times Id_{\Gamma})= \Phi \circ (Id_{H \rtimes  G} \times \Phi).$$
This shows that $\Phi$ is a group action both at the level  of objects and of morphisms, hence it is a Lie 2-group action.
\end{proof}

The various actions appearing in this subsection can are summarized in the following diagram:

\begin{center}
\fbox{\begin{Beqnarray*}
%\begin{align*}
&\mu \colon \h[1] \oplus \g \rightarrow {\chi}(A[1]) \\
\overset{\text{                        }}  {\rightsquigarrow} & \tilde{\mu} \colon \h  \rtimes \g \rightarrow \chi(A)\\
\overset{\text{Integrate the Lie algebra action     }}{\rightsquigarrow} & \Psi  \colon (\h \rtimes  G) \times A \rightarrow A\\
\overset{\text{Integrate the Lie algebroid morphism }}{\rightsquigarrow} & \Phi  \colon (H \rtimes  G) \times \Gamma \rightarrow \Gamma
%\end{align*}
\end{Beqnarray*}}
\end{center}

 \begin{thm}\label{check}
The actions $\Phi$ and $\Psi$ defined above are really integrations of $\mu$ in the sense of Def. \ref{defin} I) and II) respectively.
\end{thm}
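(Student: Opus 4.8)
The plan is to verify that the double Lie groupoid (resp. $\mathcal{LA}$-groupoid) associated to the transformation groupoid of $\Phi$ (resp. $\Psi$) recovers, upon applying the appropriate functors from diagram \eqref{rajdia}, the $Q$-algebroid \eqref{dla} of Prop. \ref{doublela}. I would treat the two cases in series, since II) reduces to I) once I) is established, much as Thm. \ref{thm:2groupaction} reduced to Prop. \ref{prop:LAgroupaction}.

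For I), the transformation groupoid of $\Psi$ is the $\mathcal{LA}$-groupoid $(\h \rtimes G)\times A \rightrightarrows A$, a group object in $\textbf{LA}$ acting on the Lie algebroid $A\to M$. Applying $\Lie_V$ (differentiating the \emph{vertical}, i.e. groupoid, direction) should yield the double Lie algebroid underlying \eqref{dla}. First I would identify the vertical Lie algebroid of $(\h\rtimes G)\times A$ as the transformation algebroid of the infinitesimal action $\tilde\mu\colon \h\rtimes\g\to\chi(A)$ on $A$ — this is essentially the definition of $\Psi$ as the integration of $\tilde\mu$, so it follows by construction together with the fact (from \S \ref{intDGLA}) that $\Lie_V$ of $(H\rtimes G)\rightrightarrows G$ is $(\h\rtimes G)\to G$. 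Then, applying $[1]_H$, the horizontal Lie algebroid structure $T_\mu \to A[1]$ is a transformation algebroid, whose homological vector field is computed via eq. \eqref{QA} from the anchor (= infinitesimal action), giving exactly $Q_{br}+Q_{action}$ of Prop. \ref{doublela}; the remaining homological vector field $-Q_\delta+Q_A$ is the one encoding the $\mathcal{LA}$-groupoid / morphic structure, and the commutation \eqref{comm} was already checked in Prop. \ref{doublela}. Matching the Lie \emph{algebra} structure on $\h\rtimes\g$ with the semidirect bracket computed in \S \ref{intDGLA} is the routine bookkeeping part. Hence the $Q$-algebroid produced is \eqref{dla}, which is I).

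For II), the transformation groupoid of $\Phi$ is the double Lie groupoid $(H\rtimes G)\times\Gamma \rightrightarrows \Gamma$ (a group object in $\textbf{StrLgd}$ acting on $\Gamma\rightrightarrows M$). Applying $\Lie_H$ (differentiating the \emph{horizontal}, i.e. $\Gamma\rightrightarrows M$, direction) one gets, by naturality of the Lie functor and the fact that $\Phi$ integrates $\Psi$ (Thm. \ref{thm:2groupaction}), precisely the $\mathcal{LA}$-groupoid transformation groupoid of $\Psi$; then case I) applies to conclude that a further $\Lie_V$ followed by $[1]_H$ gives \eqref{dla}. The one subtlety here is that $\Lie_H$ of the transformation double groupoid of $\Phi$ is genuinely the transformation $\mathcal{LA}$-groupoid of $\Psi$ and not merely something canonically isomorphic to it; this I would justify by the same argument as in \S \ref{intDGLA} using \cite[Thm 2.3]{Macdouble2} — representing elements as second derivatives of maps $\R^2\to (H\rtimes G)\times\Gamma$ — so that the canonical isomorphism is the identity on $(\h\rtimes G)\times A$.

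The main obstacle is not any single computation but rather assembling the chase through diagram \eqref{rajdia} rigorously: one must be careful that ``applying $\Lie_V$ to the transformation groupoid of $\Psi$'' and ``applying $\Lie_H$ then $\Lie_V$ then $[1]_H$ to the transformation groupoid of $\Phi$'' commute with the \emph{transformation} construction, i.e. that differentiating a transformation (double) groupoid yields the transformation (double) algebroid of the differentiated action, with the identifications being literal equalities rather than isomorphisms. This relies on the completeness assumption on $\tilde\mu|_\g$, on $A$ being integrable, and on the compatibility results of Mackenzie and Mehta invoked in \S \ref{intDGLA}; once those identifications are in place, Prop. \ref{doublela} supplies the final matching of homological vector fields and the proof closes.
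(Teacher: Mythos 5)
Your proposal is correct and follows essentially the same route as the paper: one forms the transformation objects $T_{\Phi}$, $T_{\Psi}$ and the double Lie algebroid $T_{\tilde{\mu}}$, checks explicitly that $\Lie_H(T_{\Phi})=T_{\Psi}$ (using that $\Psi$ is the derivative of $\Phi$), that $\Lie_V(T_{\Psi})=T_{\tilde{\mu}}$ (using that differentiating a transformation groupoid yields the transformation algebroid of the infinitesimal action, via Prop. \ref{prop:LAgroupaction}), and that $[1]_H(T_{\tilde{\mu}})$ is the $Q$-algebroid \eqref{dla}, with Prop. \ref{doublela} (together with Voronov's theorem, to know $T_{\tilde{\mu}}$ is a genuine double Lie algebroid) closing the argument. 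The only cosmetic difference is the order of presentation and your extra appeal to \cite[Thm 2.3]{Macdouble2}, where the paper argues the identification directly.
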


\begin{proof}
We consider the double Lie groupoid $T_{\Phi}$ obtained by taking the transformation groupoid of the action $\Phi$ and the  $\mathcal{LA}$-groupoid
$T_{\Psi}$ obtained by taking the transformation groupoid of the action $\Psi$.
Notice that $T_{\Phi}$ is really a double Lie groupoid by Prop. \cite[Prop. 3.0.15]{lucathesis} together with Thm. \ref{thm:2groupaction}, and
$T_{\Psi}$ is really an $\mathcal{LA}$-groupoid because of  Prop. \cite[Prop. 3.0.15]{lucathesis} together with Prop. \ref{prop:LAgroupaction}.

Further we consider the  double algebroid $T_{\tilde{\mu}}$ consisting of
the transformation algebroid
of the   action $ \tilde{\mu} \colon \h  \rtimes \g \rightarrow \chi(A)$
 and of the $\g$ action on $M$ obtained by restricting $\tilde{\mu}$ (vertically), together with $A\rightarrow M$ and the transformation algebroid of the action of the abelian Lie algebra $\h$ on $\g$ which sends $w\in \h$ to the constant vector field $\delta w$ (vertically):
\begin{equation*}
%\label{doubledla}
\xymatrix{
T_{\tilde{\mu}}= (\h \rtimes \g) \ar[d]\times A\ar[d] \ar[r] & \g \times M \ar[d] \\
A \ar[r]& M.}
\end{equation*}
Applying to $T_{\tilde{\mu}}$
the horizontal degree shifting functor $[1]_H$  we obtain \eqref{dla} together with the homological vector field $-Q_{\delta}+Q_A$ (see \S \ref{intDGLA}). Since by Prop. \ref{doublela} it is a $Q$-algebroid, from   Voronov's work (\cite[Thm. 1]{vorover}, see also \cite[\S 3]{vorover}) it follows in particular that
$T_{\tilde{\mu}}$ is a really a double Lie algebroid.

$T_{\Phi}, T_{\Psi}$ and
$T_{\tilde{\mu}}$ fit into the upper row of the following
table.
\begin{equation}
\label{trafogroids}
\hspace{-1.2cm}
\colorbox{mygray}{
\xymatrix{
T_{\Phi}=(H \rtimes G) \times \Gamma  \ar[r]\ar[r]\ar@<-1ex>[r] \ar[dd]\ar[dd]\ar@<-1ex>[dd]
& G  \times M \ar[dd]\ar[dd]\ar@<-1ex>[dd]
 &  T_{\Psi}=(\h\rtimes G) \times A  \ar[rr] \ar[dd]\ar[dd]\ar@<-1ex>[dd]
&& G \times M  \ar[dd]\ar[dd]\ar@<-1ex>[dd] &\\
&\ar^{\Lie_H}[r] &&&\\
 \Gamma \ar[r]\ar[r]\ar@<-1ex>[r]
&   M
& A \ar[rr]
&\ar^{\Lie}[d]
&M& \\
%(\h_{\delta} \rtimes \g) \times \Gamma \ar[r]\ar[r]\ar@<-1ex>[r]\ar[d]
&%\g \times M \ar[d]
& T_{\tilde{\mu}}=(\h \rtimes \g) \times A \ar[dd]\ar[rr]
&& \g \times M \ar[dd]
& T_{\mu}=(\h[1] \rtimes \g) \ar[dd]\times A[1] \\
&&&&\ar^{[1]_H}[r] &\\
%\Gamma \ar[r]\ar[r]\ar@<-1ex>[r]
& %M
& A\ar[rr]
&&M&A[1]
}
}
\end{equation}

We check that  applying the functor $\Lie_H$ to $T_{\Phi}$ we obtain $T_{\Psi}$. Clearly the horizontal Lie algebroid structures obtained this way are $A\rightarrow M$ and its product with the  Lie algebroid structure of the $\mathcal{LA}$-group $\h \rtimes G\rightarrow G$ (see \S \ref{intDGLA}). The left vertical Lie groupoid structure is obtained applying $\Lie_H$ to the maps defining the vertical Lie algebroid structures in $T_{\Phi}$. It is the transformation groupoid for the action $\Psi$, since  the Lie algebroid morphism $\Psi$ is obtained differentiating the Lie groupoid morphism $\Phi$.

Next we check that applying the functor $\Lie_V$ to $T_{\Psi}$ we obtain $T_{\tilde{\mu}}$.
Since the vertical groupoids of $T_{\Psi}$ are transformation groupoids for group actions (of $\h \rtimes G$ and $G$ respectively),
applying the functor $\Lie_V$ we obtain the transformation algebroids of the corresponding infinitesimal actions, which by Prop. \ref{prop:LAgroupaction} are $\tilde{\mu}$ and the $\g$-action on $M$ respectively.
The horizontal Lie algebroids in $T_{\Psi}$ are the Lie algebroid $A\rightarrow M$ and its product with the Lie algebroid structure on the $\mathcal{LA}$-group $\h \rtimes G\rightarrow G$. Since the application of the Lie functor to the vertical groupoids of $T_{\Psi}$ does not affect their spaces of units ($A$ and $M$ respectively), as horizontal Lie algebroid structures we obtain again $A\rightarrow M$ and its product with the Lie algebroid structure of the double Lie algebroid $\h  \rtimes \g\rightarrow \g$ (see \S \ref{intDGLA}). Altogether we obtain exactly  $T_{\tilde{\mu}}$.

Finally, we saw in the first part of this proof that applying to $T_{\tilde{\mu}}$
the horizontal degree shifting functor $[1]_H$  we obtain the $Q$-algebroid $T_{{\mu}}$ given in \eqref{dla}.

From this  we conclude that $\Psi$ and $\Phi$ are integrations of
the action $\mu$ in the sense of Def. \ref{defin}  I) and II)
respectively. 
\end{proof}

%%%%%%%
It is abstract to define the Lie 2-group action $\Phi$ in  Thm. \ref{thm:2groupaction} as the integration of the strict action $\mu$ we started with.
Now we give an explicit description of $\Phi$.

\begin{prop}\label{explicitPhi}
The Lie 2-group action $\Phi  \colon (H \rtimes  G) \times \Gamma \rightarrow \Gamma$
 can be described in terms of $\tilde{\mu}$ as follows:
\begin{itemize}
 \item[a)]
$g\in G$ acts by the Lie groupoid automorphism of $\Gamma$ which
integrates the Lie algebroid automorphism $\psi(g,\cdot)$ of $A$,
where $\psi$ denotes the Lie group action of $G$ on $A$ obtained by integrating the Lie algebra action
$\tilde{\mu}|_{\g} \colon \g \to \chi(A)$.
\item[b)] the Lie group action of $H$ on $\Gamma$ is obtained integrating the Lie algebra action $$\h_{\delta} \to \chi^{right}(\Gamma),  w \mapsto \overrightarrow{\tilde{\mu}(w)},$$
where for any section $s$ of $A\cong (ker \;\bs_*)|_M$ we denote by $\overrightarrow{s}$ its extension as a right invariant vector field on $\Gamma$.
\end{itemize}
Since a general element of $H \rtimes  G$ can be written as $(h,g)=(h,e)(e,g)$, this gives a complete description of the action $\Phi$.
\end{prop}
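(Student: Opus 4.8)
The plan is to verify that the two Lie group actions described in (a) and (b) restrict the action $\Phi$ to the subgroups $G$ and $H$ of $H\rtimes G$, and then invoke the fact that $\Phi$ is the unique Lie groupoid morphism integrating $\Psi$ (Thm. \ref{thm:2groupaction}) together with the decomposition $(h,g)=(h,e)(e,g)$. Concretely, I would first treat part (a): the restriction of $\Psi$ to the subgroup $\{0\}\rtimes G\subset \h\rtimes G$ is, by Prop. \ref{prop:LAgroupaction}, exactly the Lie group action $\psi$ of $G$ on $A$ by Lie algebroid automorphisms (recall $\Psi((0,g),a_x)=\psi(g,a_x)$). Since $G$ is connected and acts on $A$ by Lie algebroid automorphisms, each $\psi(g,\cdot)$ integrates to a Lie groupoid automorphism of the source-simply-connected groupoid $\Gamma$, and these assemble into a $G$-action on $\Gamma$; by functoriality of integration this $G$-action is precisely the restriction of $\Phi$ to $\{e\}\rtimes G\rightrightarrows M$. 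This gives (a).

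Next I would treat part (b): restrict $\Psi$ to the subgroup $\h\rtimes\{e\}$. By the formula in Prop. \ref{prop:LAgroupaction}, $\Psi((w,e),a_x)=a_x+\mu(w)|_x$, i.e.\ translation of the fiber $A_x$ by the value of the section $\tilde\mu(w)\in\Gamma(A)$; differentiating in $w$, the corresponding infinitesimal action of the abelian Lie algebra $\h$ (with bracket $[\cdot,\cdot]_\delta$ after restriction to the subalgebra) sends $w$ to the constant vertical vector field $\tilde\mu(w)$ on $A$. Its integration at the groupoid level is the content to identify: the Lie algebroid $\h\rtimes\{e\}$ integrates to $H\rightrightarrows \text{pt}$ (with $H$ the simply connected group of $\h_\delta$), and the induced vector field on $T_\Phi$ along the $\Gamma$-direction, being obtained by differentiating the $H$-action, must be the right-invariant extension $\overrightarrow{\tilde\mu(w)}$ of the section $\tilde\mu(w)\in\Gamma(A)\cong\Gamma((\ker\bs_*)|_M)$ — this is the standard fact that the infinitesimal generator on $\Gamma$ of an $\h$-action coming (via $\Phi$, $m$) from right translations on the $\h\rtimes G$-groupoid is the right-invariant lift. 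So the $H$-part of $\Phi$ integrates $w\mapsto\overrightarrow{\tilde\mu(w)}$, giving (b).

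Finally I would conclude: since $H\rtimes G$ is generated by the subgroups $H\rtimes\{e\}$ and $\{e\}\rtimes G$ via $(h,g)=(h,e)\cdot(e,g)$, and $\Phi$ is a group action (Thm. \ref{thm:2groupaction}), the action of a general element is $\Phi((h,g),\gamma)=\Phi((h,e),\Phi((e,g),\gamma))$, which (a) and (b) determine completely; this is exactly the asserted description. I expect the main obstacle to be step (b): namely, identifying carefully that the infinitesimal generator on $\Gamma$ of the $H$-action obtained by integrating the $\mathcal{LA}$-group action $\Psi$ is the \emph{right}-invariant (rather than left-invariant) lift of $\tilde\mu(w)$, using the identification $A\cong(\ker\bs_*)|_M$ and the chosen source/target conventions, together with checking compatibility with the multiplication $m$ on $H\rtimes G$ whose $\h$-part was described in \S\ref{intDGLA} as the transformation groupoid of $w\mapsto\overrightarrow{\delta w}$ on $G$. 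Everything else is bookkeeping with functoriality of the Lie functor and the uniqueness of integrations over source-simply-connected domains.
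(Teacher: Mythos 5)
Your proposal is correct and follows essentially the same route as the paper: part (a) by differentiating $\Phi$ and invoking uniqueness of integration over the source simply connected $\Gamma$, part (b) by identifying the infinitesimal generator of the $H$-action, and the decomposition $(h,g)=(h,e)(e,g)$ to conclude. The one step you defer to a ``standard fact'' --- that the generator $W$ of the $H$-action is the \emph{right}-invariant extension of $\tilde{\mu}(w)$ --- is precisely where the paper does its (short) work: it computes $W|_M=\tilde{\mu}(w)$ from $\Psi((w,e),0_m)$ and differentiates the morphism identity $\Phi((h(t),e),x)\circ\Phi((e,e),y)=\Phi((h(t),e),x\circ y)$ at $t=0$ to obtain $(R_y)_*W_x=W_{x\circ y}$.
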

\begin{proof}
a) Fix $g\in G$. Consider $\Phi((e,g),\cdot) \colon \Gamma\rightarrow \Gamma$.
Its derivative restricted to $(ker \;\bs_*)|_M\cong A$ is $\Psi((0,g),\cdot) =\psi(g, \cdot) \colon A \to A$, since $\Phi$ is the Lie groupoid morphism integrating the Lie algebroid morphism $\Psi$.

b) Let $w\in \h$,
 denote by $W$ the corresponding vector field on $\Gamma$ induced by the  action $\Phi$, and let $h(t):=exp_{\h_{\delta}}(tw) \in H$. For any $m\in M$ we have  $$W_m=\frac{d}{dt}|_0 \Phi ((h(t),e),m)=\Psi((w,e),0_m)=\tilde{\mu}(w)_m,$$ so $W|_M=\tilde{\mu}(w)$.
We want to show that $W$ is a right-invariant vector field, i.e., that if $x,y \in \Gamma$ are composable elements  then $(R_y)_*W_x=W_{x\circ y}$, where $\circ$ denotes the groupoid composition on $\Gamma$. Since $\Phi$ is a Lie groupoid morphism (Thm. \ref{thm:2groupaction}) and the elements
$(h(t),e)$ and $(e,e)$ of the groupoid $H \rtimes  G$ are composable,
 we have $$\Phi(((h(t),e),x))\circ \Phi((e,e),y)=\Phi((h(t),e),x \circ y),$$ and since $\Phi((e,e),y)=y$ applying the time derivative at $t=0$ we obtain exactly $(R_y)_*W_x=W_{x\circ y}$. This shows that $W=\overrightarrow{\tilde{\mu}(w)}$ and concludes the proof.
\end{proof}
\begin{remark} Prop. \ref{explicitPhi} allows also to describe the  Lie algebra action on $\Gamma $ corresponding to the    Lie group action $\Phi$. It maps $v\in \g$ to the multiplicative vector field on $\Gamma$ which corresponds   to the vector field $\tilde{\mu}(v)$ on $A$. It maps $w\in \h$ to $\overrightarrow{\tilde{\mu}(w)}$. This shows that the action $\Phi$ coincides with the action constructed in \cite[\S 11.2]{CZ2} in the special case where $A$ is the Lie algebroid of a Poisson manifold. Notice that the action of  \cite[\S 11.2]{CZ2} was constructed ``integrating''
 $T_{\tilde{\mu}}$ (see diagram in the proof of Thm. \ref{check}) first horizontally and then vertically, while we constructed $\Phi$ ``integrating'' $T_{\tilde{\mu}}$ in the opposite order.
\end{remark}

%%%%%%%%%
\subsubsection{Lie 2-groupoids and principality of the integrated action}\label{secrem}

First, we make a remark on the integration of the Lie 2-algebroid \eqref{dla}.
This is the point of view that should be generalized to integrate   non-strict actions (in the sense of \cite{ZZL2}).
\begin{remark} \label{rk:artin-mazur}
We describe the strict Lie 2-groupoid integrating the Lie 2-algebroid \eqref{dla},
obtained by applying the Artin-Mazur's codiagonal construction to the  double groupoid $T_{\Phi}$ appearing in \eqref{trafogroids}. A strict Lie 2-groupoid is a groupoid object in the category of Lie groupoids whose  space of objects (object-groupoid) is just a manifold. Our strict Lie 2-groupoid has as  space of objects $M$ (the base of $A$),  as object space of the morphism-groupoid (space of 1-arrows) $G\times \Gamma$, and as morphism   space of the morphism-groupoid (space of 2-arrows) $H\times G \times \Gamma$. The space of 2-arrows is a Lie groupoid over the space of 1-arrows with source and target defined by
\[
 \bs(h, g, \gamma)=(g, \gamma), \quad \bt(h, g, \gamma) = (\delta (h)g, \Phi(\phi(g^{-1}) (h^{-1}), 1, \gamma)),
\]
and groupoid multiplication $\vertprod$ (vertical product) defined by
\[
(h, g, \gamma) \vertprod (h', g', \gamma') = (hh', g', \gamma \gamma').
\]
 \begin{multline*}
    \xymatrix@C+8em{
      y &
      \ar@/_2pc/[l]_{(g, \gamma)}_{}="0"
      \ar[l]|{\phantom{(} (g', \gamma') \phantom{)}}^{}="1"_{}="1b"
      \ar@/^2pc/[l]^{(\delta(h')g', \Phi(\phi(g'^{-1})(h'^{-1}), 1, \gamma'))}^{}="2"
      \ar@{=>} "0";"1"^{(h, g, \gamma)=:a}
      \ar@{=>} "1b";"2"^{(h', g', \gamma')=:a'}
      x}
    \xymatrix@C+8em{\ar@{|->}[r] & }
    \xymatrix@C+8em{
      y & \ar@/_1pc/[l]_{(g, \gamma)}_{}="4"
      \ar@/^1pc/[l]^{(\delta(h')g', \Phi(\phi(g'^{-1})(h'^{-1}), 1, \gamma'))}^{}="5"
      \ar@{=>}"4";"5"^{a\vertprod a'}
      x
    }
  \end{multline*}

Moreover, there are two maps $\bt_0, \bs_0: G\times \Gamma \to M$ defined by $\bs_0(g, \gamma)= \bs_\Gamma(\gamma)$  and $\bt_0(g, \gamma)=\psi(g, \bt_\Gamma (\gamma))$ via the source and target of $\Gamma$ (denoted as $x$ and $y$ respectively in the above picture), so that we define the horizontal multiplication $\horprod$
\[
 \xymatrix{(H\times G \times \Gamma) \times_{\bs\circ \bs_0, M, \bt \circ \bt_0}  (H\times G \times \Gamma) \ar[d] \ar@<-1ex>[d] & \ar[r]^-{\horprod} &H\times G \times \Gamma \ar[d] \ar@<-1ex>[d]  \\
(G\times \Gamma) \times_{\bs_0, M, \bt_0} (G \times \Gamma) &\ar[r]^-{\horprod} &  G \times \Gamma  },
\]
by
\[\begin{split}
&(h_1, g_1, \gamma_1) \horprod (h_2, g_2, \gamma_2) = (h_1 \phi(g_1)(h_2), g_1 g_2, \Phi(1, g_2^{-1}, \gamma_1) \gamma_2),\\ &(g_1, \gamma_1) \horprod ( g_2, \gamma_2) = ( g_1 g_2, \Phi(1, g_2^{-1}, \gamma_1) \gamma_2).
\end{split}
\]
\[
  \xymatrix@C+8em{
    z &
    \ar@/_1pc/[l]_{(g_1, \gamma_1)}_{}="0"
    \ar@/^1pc/[l]^{\bt(h_1, g_1, \gamma_1)}^{}="1"
    \ar@{=>}"0";"1"^{(h_1, g_1, \gamma_1)=:a_1}
    y &
    \ar@/_1pc/[l]_{(g_2, \gamma_2)}_{}="2"
    \ar@/^1pc/[l]^{\bt(h_2, g_2, \gamma_2 )}^{}="3"
    \ar@{=>}"2";"3"^{(h_2, g_2, \gamma_2)=:a_2}
    x}\mapsto
  \xymatrix@C+5em{
    z & \ar@/_1pc/[l]_{(g_1g_2,\Phi(1, g_2^{-1}, \gamma_1) \gamma_2) }_{}="4"
    \ar@/^1pc/[l]^{(\delta(h_1)g_1 \delta(h_2)g_2, \Phi(\phi((g_1g_2)^{-1})((h_1\phi(g_1)(h_2))^{-1}), 1, \gamma)}^{}="5"
    \ar@{=>}"4";"5"^{a_1\horprod a_2}
    x.
  }
  \]
It is routine to verify that $\horprod$ is a groupoid morphism. Thus these data define   a strict Lie 2-groupoid. It is obvious that this Lie 2-groupoid differentiates to the Lie 2-algebroid \eqref{dla}. This Lie 2-groupoid can thus be interpreted as the ``action''-groupoid of the action $\Phi$.
\end{remark}

%%%%%%%%%
Next we remark that, under certain assumptions, the above action $\Phi$
in Theorem \ref{thm:2groupaction} defines a \emph{principal 2-group
bundle} (over a manifold $N$) in the sense of Wockel  \cite[Def.
I.8]{Wo}.
% the notion of \emph{principal $\cG$-2-bundle} over a base manifold $N$, where $\cG$ is a (strict) Lie 2-group.
 %It is a categorified version of the usual notion of principal bundle.
One reason why  principal  2-group bundles are  interesting is the
following \cite[Rem. II.11]{Wo}: when the Lie 2-group $\cG$
corresponds to a crossed module of Lie groups of the form
$(H,Aut(H),\partial, \phi)$, where $H$ is a Lie group and $\partial
\colon H \to Aut(H)$ is given by conjugation, then  principal
$\cG$-2-bundles  define gerbes over $N$ \cite{CMPing}.

\begin{prop}
Let $\Phi  \colon (H \rtimes G) \times \Gamma \to \Gamma$ be a Lie
2-group action  such that both the $G$-action on $M$ and the $H
\rtimes G$-action on $\Gamma$ are free and proper with the same
quotient $N:=M/G=\Gamma/(H \rtimes G)$. Then the action $\Phi$ makes
$$\Gamma \overset{\pi}{\to}  N$$ into a principal $\cG$-2-bundle.
\end{prop}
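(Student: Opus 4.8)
The plan is to verify that $\Phi$ satisfies the defining axioms of a principal $\cG$-2-bundle in the sense of Wockel \cite[Def. I.8]{Wo}, working levelwise in the simplicial/groupoid picture. Recall that $\cG = (H \rtimes G)\rightrightarrows G$ is a groupoid object in manifolds, and $\Gamma \rightrightarrows M$ likewise; the action $\Phi$ is a morphism of groupoid objects. A principal 2-bundle structure amounts to two pieces of data and their compatibility: first, that $\pi\colon \Gamma \to N$ is (the total space of) an honest principal bundle for the \emph{object-level} data appropriately interpreted, and second, that the ``shear map'' $(\Phi,\pr_2)\colon \cG \times \Gamma \to \Gamma \times_N \Gamma$ is an isomorphism of Lie groupoids. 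So the plan is: (i) set up $N$ as a smooth manifold, (ii) check freeness and properness give the quotient the structure needed so $\pi$ is a submersion, (iii) prove the shear map is a diffeomorphism on objects and on morphisms, compatibly.

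First I would record that, since the $H\rtimes G$-action on $\Gamma$ is free and proper, the quotient $N = \Gamma/(H\rtimes G)$ is a smooth manifold and $\Gamma \to N$ is a surjective submersion; similarly $M/G$ is smooth, and the hypothesis $M/G = \Gamma/(H\rtimes G) = N$ means the two projections are compatible with the groupoid source/target, i.e. $\bs, \bt \colon \Gamma \to M$ descend to the identity on $N$ and the diagram relating $\Gamma \rightrightarrows M$ to $N$ commutes. This is exactly the statement that $(\Gamma \rightrightarrows M) \to (N \rightrightarrows N)$ is a morphism of Lie groupoids realizing $\Gamma$ over the unit groupoid $N$. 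Next I would verify the principality (shear) condition: one needs the map
\begin{equation}\label{eq:shear}
(H \rtimes G)\times \Gamma \longrightarrow \Gamma \times_N \Gamma, \qquad ((h,g),\gamma) \longmapsto (\Phi((h,g),\gamma),\gamma)
\end{equation}
to be an isomorphism of Lie groupoids (where on the left we have the transformation groupoid $T_\Phi$ of \eqref{trafogroids}, and on the right the pullback groupoid of $\Gamma \times_N \Gamma \rightrightarrows M \times_N M$). At the level of objects this is the statement that the $H\rtimes G$-action on $\Gamma$ has all fibers of $\pi$ as single free orbits, which is precisely freeness plus transitivity-on-fibers; the latter follows from $N = \Gamma/(H\rtimes G)$ together with the matching of quotients. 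Smoothness of the inverse follows from properness of the action via the standard slice/submersion argument. At the level of morphisms (the ``$G$'' and ``$\Gamma$'' levels) one runs the identical argument using freeness and properness of the $G$-action on $M$, and then checks compatibility with source, target, multiplication and units of the groupoids — this is routine naturality once the object-level isomorphism is in hand, because $\Phi$ is a groupoid morphism (Thm. \ref{thm:2groupaction}).

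The main obstacle I anticipate is not any single hard computation but getting the \emph{bookkeeping of the two groupoid levels} right: one must be careful that ``principal'' for a $2$-group action really decomposes into a principal $H\rtimes G$-bundle $\Gamma \to N$ \emph{and} a principal $G$-bundle $M \to N$, fitting together as a bisimplicial/internal principal bundle, and that the hypothesis ``same quotient $N$'' is exactly what glues these. A secondary technical point is verifying that the shear map \eqref{eq:shear} is a diffeomorphism rather than merely a bijection: this is where properness of \emph{both} actions is used, via the fact that a free and proper action of a Lie group on a manifold yields a principal bundle whose division map is smooth. Once these are assembled, matching them to the precise list of axioms in \cite[Def. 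I.8]{Wo} is a direct, if somewhat lengthy, diagram chase, and I would simply cite Wockel's definition and check each clause in turn.
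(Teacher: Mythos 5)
Your proof is essentially correct in outline, but it takes a genuinely different route from the paper. The paper's entire proof is a citation: it observes that the statement is, up to notation, \cite[Prop.~4.2]{CZ2} (the case where $A$ is the Lie algebroid of a Poisson manifold), asserts that that proof carries over verbatim, and isolates the \emph{one} new ingredient that must be checked --- namely that the projection $\pi\colon \Gamma \to \Gamma/(H\rtimes G)=N$ is a morphism of Lie groupoids onto the unit groupoid $N\rightrightarrows N$, which holds precisely because the action map $\Phi$ is a Lie groupoid morphism. You instead verify principality from scratch via the shear map $((h,g),\gamma)\mapsto (\Phi((h,g),\gamma),\gamma)$ being an isomorphism $(H\rtimes G)\times\Gamma \to \Gamma\times_N\Gamma$ at both simplicial levels. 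What your approach buys is self-containedness and an explicit accounting of where freeness, properness, and the ``same quotient'' hypothesis are each used; what the paper's approach buys is brevity and a precise pointer to where the routine work has already been done.

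Two remarks on your write-up. First, the step you pass over most quickly is exactly the one the paper singles out as the content of the proof: you assert that the hypothesis $M/G=\Gamma/(H\rtimes G)=N$ ``means'' that $\bs,\bt\colon\Gamma\to M$ descend to $N$, but equality of the quotients does not by itself give this --- one needs $\bs(\Phi((h,g),\gamma))=g\cdot\bs(\gamma)$ and likewise for $\bt$, i.e.\ that $\Phi$ is a groupoid morphism covering the $G$-action on $M$ (Thm.~\ref{thm:2groupaction}). You should make that explicit rather than fold it into the hypothesis. Second, be aware that \cite[Def.~I.8]{Wo} is phrased in terms of local trivializations rather than the shear map; your argument therefore needs the extra (standard) step of producing local sections --- a local section $\sigma$ of $M\to N$ from the free proper $G$-action, promoted to a section of $\Gamma\to N$ via the unit embedding $M\hookrightarrow\Gamma$ --- before matching Wockel's axioms clause by clause. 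Neither point is a fatal gap, but both are where the actual work lives.
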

\begin{proof}
A special case of the the above statement is \cite[Prop. 4.2]{CZ2}.
Its proof carries over literarily to the present case, after one
checks that   the projection $\Gamma \to \Gamma/(H \rtimes G)=N$ is
a Lie groupoid morphism. The  latter fact  follows easily since  the
action map $\Phi  \colon (H \rtimes G) \times \Gamma \to \Gamma$ is
a Lie groupoid morphism.
\end{proof}

%%%%%%
%\newpage
%\part{Quotients on NQ-1 manifolds}
%%%%%%%%%

\section{Examples}\label{sec:ex}
In this section we display classes of examples of Lie 2-algebra actions on  NQ-1 manifolds   (see \S \ref{sec:actions})
%, of the associated  submodule $\cD\subset \chi(\cM)$    (see eq. \eqref{defD})
and of the integrated actions (see \S \ref{sec:act}).
%More examples of Lie 2-algebra actions are given at the end of \S \ref{quotmu}.

The starting data for the first example is just a bracket-preserving map of a Lie algebra into the sections of a Lie algebroid.

\begin{ep}\label{gA}[$\g$ acting on $A$]
Let $\g$ be a Lie algebra, $A\rightarrow M$ a Lie algebroid and $\eta \colon \g \rightarrow (\Gamma(A),[\cdot,\cdot]_A)$ a Lie algebra morphism\footnote{In \cite[Def. 6.1]{rajQalg}
this is called \emph{$A$-action of $\g$ on $M$}.}.
 Then $(A[1],Q:=Q_A)$ is an NQ-1 manifold by Lemma \ref{nq1la}, and
\begin{align*}
 \mu  \colon \g[1] \oplus  \g  \rightarrow& \chi(A[1])\\
(w,0)\mapsto& \eta(w)\\
(0,\delta w)\mapsto& [Q,\eta(w)]
\end{align*}
is a morphism a DGLAs. Here $\g[1] \oplus  \g$ denotes the strict Lie-2 algebra with $\delta=Id_{\g}$ and $[\cdot,\cdot]$ given by the Lie bracket on $\g$ and its adjoint action. Further
%$w\in \g$ and
we view $\eta(w)$ as an element of $\chi_{-1}(A[1])\cong \Gamma(A)$.

 \end{ep}

The following example is a special case of Ex. \ref{gA} for which we can write down explicitly the integrated action $\Phi$.
\begin{ep}\label{ex:TM}[$\g$ acting on $TM$]
Let $G$ be a Lie group and  $\hat{\eta}$ an action of $G$ on  a manifold $M$ (which for simplicity we take to be simply connected).
It is immediate that the product action
\begin{align}\label{simpleac}
   (G \times  G) \times (M\times M) \rightarrow& (M \times M)\\
(g_1, g_2), (m_1,m_2)  \mapsto & (g_1 m_1,g_2m_2) \nonumber
\end{align}
is a Lie 2-group action, where all the Lie groupoids appearing are pair groupoids.

We show how to recover this Lie 2-group action from infinitesimal data.
Let $\g$ be the Lie algebra of $G$, and  $\eta \colon \g \rightarrow \chi(M)$ the infinitesimal action corresponding to $\hat{\eta}$ (that is, a Lie algebra morphism $\g \to (\Gamma(TM),[\cdot,\cdot]_{TM})$).
By Ex. \ref{gA} we obtain a (strict) morphism of DGLAs
$ \mu  \colon \g[1] \oplus  \g  \rightarrow \chi(T[1]M)$.
It induces a Lie algebra morphism
\begin{align*}
 \tilde{\mu}  \colon \g_{ab} \rtimes  \g  \rightarrow& \chi(TM)\\
(w,0)\mapsto& \eta(w)\\
(0,\delta w)\mapsto& (\eta(w))^T.
\end{align*}
Here $\g_{ab}$ denotes the vector space underlying the Lie algebra $\g$ and
the first $\eta(w)$ is
viewed a vector field tangent to the fibers of $TM\to M$.
The tangent lift $(\eta(w))^T$ of the vector field $\eta(w)$ on $M$ appears since it agrees with the element $[[Q,\eta(w)],\cdot]=[\eta(w),\cdot]_{TM}$ of $CDO(TM)$
(see Lemma \ref{N1}).
%This in turn induces a
%Lie algebra morphism  \mcomment{How??}
%\begin{align*}
%      \g \rtimes  \g \rightarrow& \chi(M\times M)\\
%(w,0)\mapsto& (\eta(w),0)\\
%(0,\delta w)\mapsto& (\eta(w),\eta(w)).
%\end{align*}
By Prop. \ref{explicitPhi}
this integrates to the Lie 2-group action
\begin{align*}
 \Phi \colon (G \rtimes  G) \times (M\times M) \rightarrow& (M \times M)\\
(h,g), (m_1,m_2) \mapsto & (hgm_1,gm_2).
\end{align*}
Under the isomorphism of Lie 2-groups (over $Id_G$)
$$G\rtimes G \cong G\times G,\;\; (h,g)\mapsto (hg,g)$$
to the pair groupoid $G\times G {\rightrightarrows}{G}$, this action corresponds to \eqref{simpleac}.
\end{ep}

\begin{ep}\label{gAA}[Actions on $T^*M$]
Let $\h[1]\oplus \g$ be a strict Lie-2 algebra and $(M,\pi)$ a Poisson manifold. Since $T^*M$ is a Lie algebroid, $(\cM,Q):=(T^*[1]M, [\pi,\cdot]_S)$ is a NQ-1 manifold, where $[\cdot,\cdot]_{S}$ denotes the Schouten bracket of multivector fields.

In \cite[\S 8,9]{CZ2} Cattaneo and the first author consider
a morphism of DGLAs of the form
\begin{align}\label{fromCZ2}
\h[1] \oplus \g &\rightarrow {\chi}_{-1}(\cM)\oplus  {\chi}_0(\cM)\\
(w,v)&\mapsto \;\;\;X_{{J_0}^*w}+ X_{{J_1}^*v} \nonumber
\end{align}
where $({J_0},{J_1}): \cM \rightarrow    (\h[1] \oplus \g)^*[1]$ is a Poisson (moment) map, and discuss its reduction.

When $\h=\g$ and the differential is $Id_{\g}$,  the morphism \eqref{fromCZ2} is recovered from  our Ex. \ref{gA} with $A=T^*M$ and $\eta \colon \g \to \Gamma(T^*M), w \to -d(J^*_0v)$. Notice that in this case, as pointed out in \cite[Ex. 15.2]{CZ2},
the morphism \eqref{fromCZ2} is equivalent to an ordinary Hamiltonian

 action of $\g$ on $(M,\pi)$.
\end{ep}

Given a Lie group, we see that  the conjugation and adjoint actions fit into our framework:
\begin{ep}\label{adj}[Adjoint action]
Let $\g$ be a Lie algebra and consider the DGLA morphism
$$\mu \colon \g \rightarrow \chi(\g[1]),\;\;\;\; v \mapsto [Q,\iota_v],$$
where $\iota_v$ is the (constant) vector field corresponding to $v$ under $\g\cong \chi_{-1}(\g[1])$ and $Q$ is the homological vector field on $\g[1]$.
%Notice that
% $\cD$ as in eq. \eqref{defD}  is usually not
% a distribution  on $\g[1]$ (it is iff $\g$ is an abelian Lie algebra).
We now show that 
diagram \eqref{trafogroids} about transformation groupoids/algebroids applied to this example  is the following:

\begin{equation*} \xymatrix{
 \text{Transf. groupoid  of conjugation}\;\;\;\ar^{\Lie_H}[r]
%\ar^{\Lie_V}[d]
&
%\mbox{LA-group}\colon
\;\;\; \text{Transf. groupoid  of }Ad
%\ar^{[1]}[r]
\ar^{\Lie}[d]
%\mbox{$Q$-groupoids} \ar^{\Lie}[d]
\\
&
%\mbox{Lie algebra algebroid}
\;\;\;\;\;\;  \text{Transf. algebroid  of }ad.
%\ar^{[1]_V}[d]
%\mbox{$Q$-algebra}
 %\ar^{[1]}[d]
\\
}
\end{equation*} 

The  Lie algebra action on $\g$ corresponding to $\mu$ reads
$$\tilde{\mu} =ad \colon \g \rightarrow End(\g)\subset \chi(\g),\;\;\;\; v \mapsto ad_v$$ by  Lemma \ref{N1}.
Integrating this Lie algebra action we  obtain essentially the adjoint action of $G$ on $\g$.
 More precisely, we obtain the Lie algebroid morphism  (see Prop. \ref{prop:LAgroupaction})
 $$ \Psi=Ad \colon (G \to G)\times (\g \to \{pt\})\to (\g \to \{pt\}).$$  Integrating this Lie algebroid morphism,   by Prop.  \ref{explicitPhi} a)  we obtain the action  of $G$ on itself by conjugation,
or more precisely, the Lie 2-group action
 $$\Phi=\text{conjugation} \colon (G \rra G)\times (G\rra \{pt\})\to (G\rra \{pt\}).$$
\end{ep}

\appendix
\section{Appendix}

In this  appendix  we prove two statements presented in the main body of the paper.

\subsection{Proof of Lemma \ref{N1} } \label{app:nq}
%In this section, we give more details on N-manifolds.
%Here we give the proof of Lemma \ref{N1} and make a remark.
\begin{proof}[Proof of Lemma \ref{N1} ] By Example \ref{VBs}
\begin{align}\label{wedgeE}
\Gamma(S^{\bullet}( E^*[-1]))=\Gamma(\wedge^{\bullet}E^*)
\end{align}
is a sheaf over $M$ making $\cM:=E[1]$ into a degree $1$ N-manifold.
 On the right hand side appears the ordinary exterior product of the vector bundle $E^*$, and elements of $\wedge^{k}E^*$ are assigned degree $k$.
 
Conversely, let $M$ be a topological space and $\cO_M$ a sheaf of graded commutative algebras over $M$ as in Def. \ref{def:grm}, defining a graded manifold $\cM^{\prime}$. Then $M$ must be a smooth manifold and  the degree $1$ elements of $\cO_M$ form a locally free module over $C^{\infty}(M)$, hence sections of a vector bundle, whose dual we denote by $E$. From Def. \ref{locm} we conclude that $\cM^{\prime}\cong E[1]$.

From Def. \ref{locm} it is clear that $C(\cM)$ is generated (as a graded commutative algebra) by its elements of degree $0$ and $1$. By eq. \eqref{wedgeE} we have $C_0(\cM)=C^{\infty}(M)$ and
$C_1(\cM)=\Gamma(E^*)$.

A vector field on $\cM$, since it is a graded derivation, is determined by its action on functions of degree $0$ and $1$. Let $f_0,g_0\in C_0(\cM)$ and $f_1,g_1\in C_1(\cM)$. If $X_{-1}\in \chi_{-1}(\cM)$, it maps $C_0(\cM)$ to zero and maps $C_1(\cM)$ to $C_0(\cM)$.  Hence the graded derivation property   is simply
\begin{equation}  \label{eq:der1}
X_{-1} (f_0 g_1) = f_0 X_{-1} (g_1),
\end{equation}
so the action of $X_{-1}$  on $\Gamma(E^*)$ is $C^{\infty}(M)$-linear, i.e. given pairing with a section of $E$, and
we conclude that $\chi_{-1}(\cM)=\Gamma(E)$.

If $X_{0}\in \chi_{0}(\cM)$, then the action of $X_0$ preserves $C_0(\cM)$ as well as $C_1(\cM)$.
The graded derivation property on generators   reads
$$
  X_0(f_0 g_0) = X_0 (f_0) g_0 + f_0 X_0 (g_0),\;\;\;\;\;\;
 X_0 (f_0 g_1) = f_0 X_0 (g_1) + X_0 (f_0 ) g_1.
$$
The first equation  tells us that $X_0|_M$, defined restricting the action of $X_0$ to $C_0(\cM)$, is a vector field on $M$, and altogether we conclude that
$X_0$ is a covariant differential operator on $E^*$ with symbol $X_0|_M$. Hence
 $\chi_0(\cM)=CDO(E^*)$.

The  canonical identification $CDO(E^*)\cong CDO(E)$ obtained dualizing CDOs sends $X_{0}\in \chi_{0}(\cM)=CDO(E^*)$ to $[X_0,\cdot]\in CDO(E)$  (using the identification $\chi_{-1}(\cM)=\Gamma(E)$). Indeed eq. \eqref{dualCDO} applied to $Y^*=X_0$ reads
$$X_{-1}(X_{0} (\xi))+[X_0,X_{-1}](\xi)=X_0(X_{-1} (\xi))$$
for all $X_{-1}\in \chi_{-1}(\cM)=\Gamma(E)$ and $\xi \in C_1(\cM)=\Gamma(E^*)$.
\end{proof}

 \begin{remark}\label{identifyvf} Consider again a vector bundle $E \to M$ and $\cM=E[1]$. In Lemma \ref{N1} we saw that
$C_0(\cM)= C^{\infty}(M)$ and that $C_1(\cM)$ agrees with the fiber-wise linear functions on $E$. \cite[Lemma 8.7]{CZ2}  states that this induces
a canonical, bracket preserving identification of vector fields
\begin{align*}
{\chi}_{-1}(E[1])\cong &\{\text{vertical vector fields on $E$ which are invariant under translations in the fiber direction
% constant on each fiber}
}\}\\
 {\chi}_{0}(E[1])\cong& \{\text{vector fields on }E\text{ whose flow preserves the vector bundle structure}\}.
\end{align*}
\end{remark}

\subsection{Proof of Prop. \ref{prop:LAgroupaction}}\label{App:longproof}
%Here we present the proof of Prop. \ref{prop:LAgroupaction}.
\begin{proof}[Proof of Prop. \ref{prop:LAgroupaction}] Checking that $\Psi$ is really a group action is a straight-forward computation that uses

\begin{equation*}
\psi(g, (\mu(w))_x)=(\mu(gw))_{gx} \;\;\; \text{ for all }g\in G,w\in \h, x\in M,
\end{equation*}
 which is just the equivariance of $\mu|_{\h} \colon \h \rightarrow \chi_{-1}(A)=\Gamma(A)$ with respect to the $G$ actions on $\h$ and on the vector bundle $A$. %An element $v\in \g$
%acts infinitesimally on $\h$ by $[v,\cdot]$ and on $\chi_{-1}(A)$ by $[\mu(v),\cdot]$.
The infinitesimal $\g$-equivariance holds by Lemma \ref{classical},
% since $\mu$ is bracket-preserving,
and as $G$ is connected  the global $G$-equivariance also holds.    A simple computation also shows that differentiating the group action $\Psi$ one obtains $\tilde{\mu}$. This proves the first part of the proposition.

Now we check that $\Psi$ preserves the anchor maps. Fix $(w,g)\in \h \rtimes  G$ and $a_x\in A_x$ (the fiber of $A$ over $x\in M$). Applying the anchor to $((w,g),a_x)$ we obtain $(\overrightarrow{\delta w}(g),\rho_A(a_x))$, and applying  the derivative of the action map $G \times M \rightarrow M$ gives
\begin{equation}\label{anch}
(\delta w)_M(gx)+ g\cdot \rho_A(a_x),
\end{equation}
 where
$(\delta w)_M$ denotes the vector field on $M$ induced by the infinitesimal action of $\delta w \in \g$ on $M$,   the dot denotes tangent lift of the action of $G$ on $M$, and $\rho_A$ is the anchor of $A$.
Now
 $$(\delta w)_M=(\mu(\delta w))|_M=[Q,\mu w]|_M=\rho_A(\mu w),$$  where the second equality (between vector fields on $A[1]$) holds because $\mu$ respects differentials, or alternatively by  Lemma \ref{classical}.
We saw that  the $G$-action $\psi$ on $A$ is by Lie algebroid automorphism, so in particular
 $\rho_A \colon A \rightarrow TM$ is $G$-equivariant and
$g\cdot \rho_A(a_x)=\rho_A(g\cdot a_x)$. Hence \eqref{anch} is equal to $\rho_A(\Psi((w,g),a_x))$, proving that $\Psi$ respects the anchor maps.\\

Checking that $\Psi$ maps the bracket $[\cdot,\cdot]_E$ on the product Lie algebroid $E:=(\h \rtimes  G) \times A$ to the bracket  $[\cdot,\cdot]_A$ on $A$ is more involved. First we remark that   $E$, as
  a vector bundle over $G\times M$, is a Whitney sum of pullback vector bundles
  $\pi_G^*(\h \rtimes  G) \oplus \pi_M^* A$, where $\pi_G$ and $\pi_M$
  are the obvious projections of $G\times M$ onto $G$ and $M$.
  %obtained by pulling back  $\h \times G$ via $pr_G$ and $A$ using $pr_M$.
  We define the vector bundle automorphism
\begin{equation}\label{varphi}
\varphi\colon  \pi_M^*A\to \pi_M^*A\;,\; a_{(g,x)} \mapsto g^{-1}(a_{(g,x)})
\end{equation}
over the base diffeomorphism $(g,x)\mapsto (g, g^{-1}  x)$ of $G\times M$.
It is actually a Lie algebroid automorphism, since each $g\in G$ acts by Lie algebroid automorphisms of $A$. Notice that any $a\in \Gamma(A)$ can be pulled back to a
section of $\pi_M^*A\subset E$ (also denoted by $a$), and its image $\varphi(a)$ under $\varphi$ is given by
\begin{equation}\label{giro}
(\varphi(a))_{(g,x)}=g^{-1}\cdot a_{gx}
\end{equation}
The section $\varphi(a)\in \Gamma(E)$ is $\Psi$-projectable, and projects to $a\in \Gamma(A)$. Similarly,
for any $w\in \h$, the constant section $w\in\Gamma(\pi^*_G (\h \times G))\subset E$ projects to $\mu(w)\in \Gamma(A)$.
Sections of the form $\varphi(a)$ and $w$ span the whole of $E$, hence, by \cite[Prop. 4.3.8]{MK2}, it suffices to consider such sections. We have
$$\Psi[\varphi(a_1),\varphi(a_2)]_E=\Psi (\varphi [a_1,a_2]_A)=[a_1,a_2]_A= [\Psi \varphi a_1,\Psi \varphi a_2]_A$$
where in the first equality we used that $\varphi$ is a Lie algebroid automorphisms of $\pi_M^*A$. We have
$$\Psi[w_1,w_2]_E=\Psi([w_1,w_2]_{\delta})=\mu([w_1,w_2]_{\delta})=[\mu  w_1,\mu w_2]_A=
[\Psi w_1,\Psi w_2]_A$$ where in the third equality we used Lemma \ref{classical}.

Next we show that
\begin{equation}\label{hell}
[w,\varphi (a)]_E=\varphi ([\mu(w),a]_A)    \text{       for all }a\in \Gamma(A),w\in \h,
\end{equation}
as it will imply that
$$\Psi [\varphi (a),w]_E=\Psi \varphi ([a, \mu(w)]_A)= [a, \mu(w)]_A= [\Psi \varphi (a), \Psi w]_A$$
and thus conclude our proof.\\

To show \eqref{hell} we choose, on an open set $U$ of $M$,  a local frame of sections $a_i$ of $\Gamma(A)$. We have
\begin{equation}
(\varphi a_i)_{(g,x)}=f_i^j(g,x)a_j(x)
\end{equation}
for functions $f_i^j$ defined on $G\times U$. Here we use the Einstein summation convention.
By the Leibniz rule we can write the l.h.s. of eq. \eqref{hell} as
\begin{equation}\label{hellrhs}
([w,\varphi (a_i)]_E)_{(g,x)}=
\overset{\rightarrow}{\delta w}(f_i^k(g,x)) a_k(x),
\end{equation}
where $\overset{\rightarrow}{\delta w}$ denotes the right-invariant vector field on $G$ whose value at the identity is $\delta w$.

Notice that this lies in $\pi^*_M A\subset E$. Further, using the identification $\Gamma(A)=\chi_{-1}(A[1])$, we have
\begin{align}\label{second}
 ([\mu{w},{{a_i}}]_A)_x=&[[Q_A,\mu{w}], {a_i}]_x=[\mu(\delta {w}),  {a_i}]_x=(\cL_{\mu(\delta {w})}
{a_i})_x= \frac{d}{dt}|_0 exp(-t\delta w)\cdot (a_i)_{exp(t\delta w)\cdot x}\\
\overset{\eqref{giro}}{=}&
\frac{d}{dt}|_0 (\varphi a_i)_{(exp(t\delta w), x)}=
\frac{d}{dt}|_0 f_i^j(exp(t\delta w),x)a_j(x). \nonumber
\end{align}
We deduce that
\begin{align*}
(\varphi [\mu(w),a_i]_A)_{(g,x)}\overset{\eqref{giro}}{=}&g^{-1}\cdot ( [\mu(w),a_i]_A)_{gx}\\
\overset{\eqref{second}}{=}& \frac{d}{dt}|_0 f_i^j(exp(t\delta w),gx)\cdot g^{-1} a_j(gx)\\
\overset{\eqref{giro}}{=}&\frac{d}{dt}|_0 f_i^j(exp(t\delta w),gx)\cdot(\varphi a_j)_{(g,x)}\\
=&\frac{d}{dt}|_0 f_i^j(exp(t\delta w),gx)\cdot f_j^k(g,x) a_k(x)\\
=&\frac{d}{dt}|_0 f_i^k(exp(t\delta w)g,x)a_k(x)\\
=&  \overset{\rightarrow}{\delta w}(f_i^k(g,x)) a_k(x)\\
\overset{\eqref{hellrhs}}{=}& ([w,\varphi (a_i)]_E)_{(g,x)}.
\end{align*}
Here
%in the second equality we used eq. \eqref{second},
 in the third last equality we used the ``multiplicativity formula''
\begin{equation}
f_i^j(gh,x)=f_i^k(g,hx)f_k^j(h,x)
\end{equation}
which can be checked writing out $(\varphi a_i)_{(gh,x)}$ by means of eq. \eqref{giro}.
%and in the last equality we used eq. \eqref{hellrhs}.
Hence eq. \eqref{hell} is proved and we are done.
\end{proof}

\begin{remark}
It can be checked that the Lie algebroid automorphism $\varphi$ of $\pi^*_M A$ in \eqref{varphi} can be extended to a Lie algebroid automorphism of $E$ by asking that it maps the constant section $w$ to $w-\varphi(\mu w)\in \Gamma(E)$ for all $w\in \h$.
\end{remark}

\bibliographystyle{../bib/habbrv}
\bibliography{../bib/bibz}
%\bibliography{bibz}

\end{document}